\begin{document}

\title{New links between PDE's and  Voronoi patterns}


\author{Yuriria Cortés-Poza         \and
        David Padilla-Garza         \and
        Pablo Padilla-Longoria
}

\institute{Y. Cortes-Poza \at
              IIMAS, Unidad Académica de Yucatán, Universidad Nacional Autónoma de México (UNAM), Yuc., México \\
              \email{yuriria.cortes@iimas.unam.mx}           
           \and
           D. Padilla-Garza \at
              Institute for Scientific Computing, Mathematics Department, TU Dresden, Dresden, Germany\\
              \email{david.padilla-garza@tu-dresden.de}
            \and
            P. Padilla-Longoria \at
                IIMAS, Mathematics and Mechanics Department,     Universidad Nacional Autonoma de México (UNAM), Ciudad Universitaria, México\\
                \email{pablo@mym.iimas.unam.mx}
              }

\date{Received: date / Accepted: date}

\maketitle

\begin{abstract}

This paper presents a range of results in partial differential equations (PDEs) in which Voronoi patterns arise.
We investigate the connection between the solution to an elliptic equation and its probabilistic interpretation as a stochastic colonization game. An agent-based model is designed and implemented to generate the Voronoi cells simulating experimental results with bacteria. We also consider the analytical solution to the problem, which enables us to define what we call a harmonic Voronoi tessellation.\\ 
We analyze parabolic equations in Riemannian manifolds, which have important applications in chemical reactions and diffusive fronts. By utilizing short-time heat kernel estimates, we demonstrate that the interaction of $n$ point sources gives rise to a Voronoi tessellation.\\
We recall some well-known results of wavefronts interactions from point light sources and the Huygens principle. We apply results about the particular set of weak solutions to the eikonal equation to characterize Voronoi patterns arising in this context as rectifiable sets.\\
Finally, we present an optimal transport problem and the corresponding Monge-Ampère equation, in which a uniform measure is transported to a sum of $n$ Dirac masses with a cost given by the Euclidean distance. These problems are naturally linked to power sets, a generalization of Voronoi tessellations. 

\keywords{Voronoi patterns \and PDE's \and Agent based model \and Applied mathematics}

\end{abstract}

\section{Introduction}
Voronoi tessellations appear in many contexts and have critical applications in several fields, including archaeology, biology, computer science, economics, engineering, epidemiology, medicine, physics, image processing, and urban planning.

They go back as far as Descartes in 1644. He informally drew a diagram of what is now called a Voronoi tessellation when explaining his vortex theory about the universe's structure. Peter Gustav Lejeune Dirichlet used two-dimensional and three-dimensional Voronoi diagrams to study quadratic forms in 1850. In Britain,  the physician John Snow used a Voronoi diagram in 1854 to establish that most people who died in the Broad Street cholera outbreak lived closer to the infected Broad Street water pump than any other (\cite{snow1856cholera}).
Voronoi diagrams are named after Georgy Feodosievych Voronoi, who defined and studied the general n-dimensional case in 1908 (see \cite{aurenhammer2013voronoi} for more applications and details on the history of the subject).

Voronoi tessellations are defined straightforwardly using the distance function as follows. Consider a finite set of points in a metric space.
Two arbitrary points in the space are in the same cell if they have a common closest point to the original set of given points. 
More precisely, we have the following definition:

\begin{definition}
Given $N$ points (generators), $P=\{x_1,...x_N\}$ on $M^n$ an $n$-dimensional Riemannian manifold (in particular $\mathbf{R}^n$), the $i-th$ Voronoi cell is defined as
\begin{equation}
    V_{i} = \{ x \in M^{n} | d(x, x_{i}) < d(x, x_{j}) \text{ for } i \neq j \}.
\end{equation}
\end{definition}

In \cite{wormser2008generalized,boots2009spatial,aurenhammer2013voronoi} and references therein, an up-to-date description of the methods available in computational geometry can be found for computing the Voronoi cells. Computing Voronoi cells if the generators are not points, but general sets is a more intricate problem. In \cite{larsson2014iterative}, the author introduces an algorithm for computing the measure of the Voronoi cells of generators of arbitrary codimension in Euclidean space of arbitrary dimension. The algorithm relies on numerically solving a specific eikonal equation. 

This connection to the eikonal equation allows characterizing the boundary of Voronoi cells as follows: Consider $n$ point sources located
at the points, $x_i$, and suppose that a spherical front starts from each of them at time $t=0$. We consider the set of points where two fronts coming from different sources meet as time progresses (see figure \ref{wavefront}), i.e., the "collision" set of the fronts. The case of two points is easily pictured and determines the perpendicular bisector of the segment joining $x_1$ and $x_2$. This set contains the boundary of the Voronoi tessellation since, in order for two fronts to collide at a certain point, they have to be equidistant from at least two of the originally given points. From this set, we can obtain the boundary of the tessellation by removing those points in the collision set that belong to the interior of a Voronoi cell. 

\begin{figure}[ht]
\label{wavefront}
\centering
  \includegraphics[width=\linewidth]{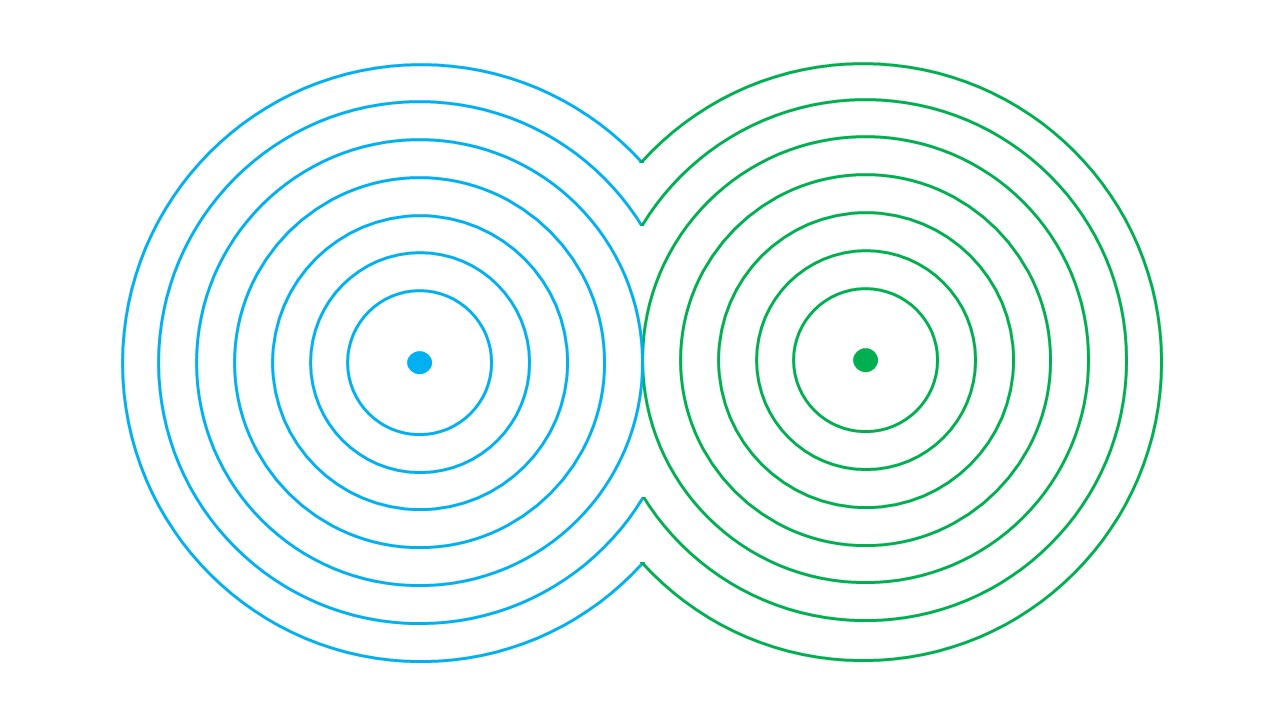}\\
  \caption{Two fronts coming from different sources (blue and green) that meet as time progresses. }\label{fronts}
\end{figure}

This principle is obvious but has several advantages. First, it can be used to characterize the boundary of the Voronoi cells as the singular set of a weak solution to a Hamilton-Jacobi equation, the eikonal equation. In \cite{mantegazza2003hamilton}, it was proved that the distance function could be characterized as the unique viscosity solution of a particular eikonal equation within a class of functions satisfying reasonable technical conditions. In \cite{mantegazza2003hamilton}, it was also shown that the set of singularities is rectifiable. Their results are valid in a broad class of Riemannian manifolds, including Euclidean space of arbitrary dimension. Second, this principle has a natural and straightforward numerical implementation. Third, it can be interpreted through diffusive waves, providing a natural interpretation of biological and physical phenomena. Recently this principle has been exploited in \cite{de2015calculating} to compute Voronoi tesselations using simple chemical reactions. The procedure in \cite{de2015calculating} has the advantage that the time needed to compute a Voronoi tessellation is independent of the number of points. This last approach, in terms of diffusion, also allows us to study the problem from the point of view of stochastic processes and to formulate a dynamic representation of Voronoi diagrams using random walks.

Voronoi cells are also the object of study in probability, in the context of percolation, see \cite{tassion2016crossing,bollobas2006critical,aizenman1998scaling,balister2005percolation,benjamini1998conformal}. The setting is this: on the plane, there is a random (Poisson) point process, and then the Voronoi tessellation corresponding to this point configuration is formed. Each cell is colored black or white with probability $p$ or $1-p$. A fundamental question is: what is the probability of an infinite black and connected set? 

A particular class of Voronoi tessellations is \emph{centroidal} Voronoi tessellations. These are Voronoi tesselations in which the generators are also the centers of mass of each cell. Such tessellations have been widely studied in the computer science literature: \cite{du1999centroidal,du2006convergence,du2002grid,du2003voronoi}. A common problem is to find points on a given set such that the resulting tessellation is centroidal. 

Voronoi tessellations can also be used to construct a mesh for a numerical solution of a PDE; this has been done both in the context of pure analysis and applications: \cite{guittet2015solving,boscheri2013semi,ghosh2011micro,gaburro2020high,mostafavi2009representing}.

Voronoi tessellations are naturally linked to another geometric object called Delaunay triangulation. Given a set of points $x_{i}$, a triangulation is a set of triangles such that the union covers the convex hull of $x_{i}$ and such that the vertices of the triangles belong to the set $x_{i}$. A triangulation is called Delaunay if the circumcircle does not contain any other point in the set $x_{i}$ for each triangle in the triangulation. The connection is this: given a Delaunay triangulation, one defines the triangles' circumcenters (intersection of the perpendicular bisectors), $y_{i}$. One then draws a line between $y_{i}$ and $y_{j}$ if and only if their corresponding triangles are adjacent. The resulting graph is the boundary of the Voronoi tessellation generated by points $x_{i}$. Given this duality, the study of Voronoi tessellations and Delaynnau triangulations are often linked (see \cite{aurenhammer2013voronoi}). 

Voronoi patterns are common in many natural systems, from biological tissues and organisms to geological formations and crystal growth \cite{Jungck2021Voronoi}, \cite{Kang2017}. These patterns emerge due to the optimization of local interactions and resource allocation, forming different cells or regions connected by a network of boundaries that minimize overlap and maximize proximity. One example of such patterns can be found in honeybee nests, where the Voronoi tessellation forms hexagonal cells to maximize geometric efficiency. Another example can be seen in the arrangement of leaves on a stem, where Voronoi patterns have been observed to optimize access to sunlight and space. Voronoi patterns are also observed in various areas of physics, from condensed matter physics to fluid dynamics \cite{Frenning2022}. These patterns are aesthetically pleasing and provide insights into the mechanisms of growth and development in natural systems.

We present in section 2 a colonization game and its relation to the solution of an elliptic equation based on the probabilistic representation just mentioned. We also consider another related problem, namely a harmonic function on a perforated domain, and use the solution to define a harmonic tessellation (as defined in this work). Section 3 discusses the formulation in terms of diffusive waves motivated by some applications to pattern formation in biological, chemical, and physical phenomena. We present some results based on short-time estimates of the heat kernel. We include a short discussion on nonlinear parabolic equations, e.g., the KPP or the porous medium equation exhibiting spherical fronts as natural candidates to be studied analytically and numerically. We devote Section 4 to characterizing Voronoi cells in terms of spherical waves using the Huygens principle. Finally, in section 5, we study a simple transportation problem that gives rise to a Voronoi tessellation and its connection to the Monge-Ampère equation.  
The last section is devoted to open questions and concluding remarks.

 \section{Elliptic equations and Voronoi cells}
 In the first part of this section, we consider the elliptic problem associated with the stationary concentration of a chemical diffusing on the plane resulting from $n$
 point sources of equal intensity. We discuss its relation with Voronoi cells motivated by a colonization game, show numerical simulations, and describe how it can model different systems, including bacterial growth and urban dynamics.
 The second part introduces a tessellation induced by the graph of a harmonic function on the perforated plane. 
 In order to fix ideas, we start by considering the stationary concentration of a substance diffusing on the plane from $n$ sources. More precisely, let $x_1,...,x_n$ be as above and consider the elliptic problem
 \begin{equation}
    \begin{split}
     \Delta u &= \sum_{i=1}^n \delta(x-x_i) \quad{\rm on}\quad \mathbb{R}^2\\
        u&\to 0 \quad{\rm as}\quad x \to\infty.
    \end{split}
\end{equation}
If $\varphi(x)$ is the fundamental solution for Laplace's equation, then the solution to the above problem 
can be written as
\begin{equation}
u(x)=\sum_{i=1}^n \varphi(x-x_i).    
\end{equation}
This solution has a natural probabilistic interpretation: it is (up to a multiplicative normalization constant) the stationary probability distribution of many Brownian walks starting at each source. Moreover, each Brownian walk is labeled with a different color, according to the source where it started. In that case, it is natural to expect that when the number of Brownian paths tends to infinity, on average, the solution will be colored as a Voronoi tessellation. This phenomenon suggests the following colonization game: From each point, $x_i$ an equal number of Brownian explorers start each one claiming the territory it moves around until it finds an explorer of a different color in its $\varepsilon$-neighborhood. Then, they stop and are replaced by two new explorers that emerge from their respective sources.

We formalize the previous discussion.
Consider $k$ Brownian motions starting at each of the $n$ points $x_i$, that is, $kn$ Brownian motions, and denote them by 
\begin{equation}
    X_{l,m}(t),
\end{equation}
with
\begin{equation}
    X_{l,m}(0)= x_l, 
\end{equation}
for each $m$, and where $l=1,...,n$ and $m=1,...,k$.
For $\varepsilon>0$ we consider walks
\begin{equation}
    X_{l,m}(t) \, \mbox{ and }\, X_{r,s}(t),
\end{equation}
for $t<T$, and we stop them if
\begin{equation}
   \mathrm{dist}(X_{l,m}(t),X_{r,s}(t'))<\varepsilon,
\end{equation}
for some $t,t' \in [0,T]$, and restart them at $x_l$ and $x_r$ respectively.

We define, for fixed $T$,$\delta$ the function
\begin{equation}
   C_\delta:\mathbb{R}^2\to \mathbb{N}^n
\end{equation}
as $(C_\delta(x))_{i}$ is the number of particles from source $x_{i}$ that have come at distance smaller than or equal to $\delta$ of the point $x$. We define $C_\delta^M: \mathbb{R}^2\to \mathbb{N}$ as 
\begin{equation}
    C_\delta^M := \mathrm{argmax}_{i} (C_\delta(x))_{i}
\end{equation}
The claim is that $C_\delta^M$ determines the Voronoi tessellation 
corresponding to the generators $x_i$, when $k\to \infty$:
\begin{equation}
    \lim_{k\to\infty} (C_\delta^M)^{-1}(i)= V(x_i),
\end{equation}
for $\delta$ sufficiently small. As before, $V(x_i)$ is the Voronoi cell associated with $x_i$.

The following algorithm is based on the description above:

\begin{itemize}
\item[a)] Domain. We define a rectangle ${\bf R}=\{(x,y):0\leq x\leq R, 0\leq y\leq U\}$.
\item[b)] Sources and particles: We randomly choose $n$ points in ${\bf R}$; these points will be the sources. $m$ particles will emerge from each source and move randomly with a fixed step size.
\item[c)] Coalitions: When a particle $P$ from source $S_i$ collides with the particle's trajectory from a different source or with the boundary, $P$ will be sent to its source $S_i$ and will resume moving from there.
\item[d)] Iterations: The particles will move synchronously and stop after a certain number of iterations.
\end{itemize}

The parameters we used are shown in table (\ref{AlgPar}).

\begin{table}
\centering
\label{AlgPar}
\begin{tabular}{|l|r|}
\hline
 \multicolumn{2}{|l|}{Parameter\quad\quad\quad\quad\quad\quad\quad\quad\quad\, Value} \\
 \hline
   Number of sources.             & $3,4,5$ \\
   Number of particles per source & $100$\\
   Number of iterations           & $300-500$\\
   Steps of the particles.        & $\alpha=1/10$ where $\alpha\in(1-,1)$\\
   Domain ${\bf R}$                       & ${\bf R}=\{(x,y):x,y\in[0,2]\}$ \\
   \hline
    \end{tabular}
    \caption{Algorithm parameters}.
\end{table}

Typically, 300 to 500 iterations and 100 particles are sufficient to recover Voronoi cells.

The following figures (\ref{voronoi4},\ref{voronoi5},\ref{voronoi6}) show the output of several simulations of our program with a varying number of sources and the number of iterations. In all of them, we draw with colored points the trajectories of the particles. The ones that emerge from a given cell have the same color. On top of the colored points, we draw Voronoi cells, computed using the Delaunay triangulation. The black circle centers correspond to the sources and are drawn so that they can easily be identified. 

\begin{figure}[ht]
\centering
  \includegraphics[width=\linewidth]{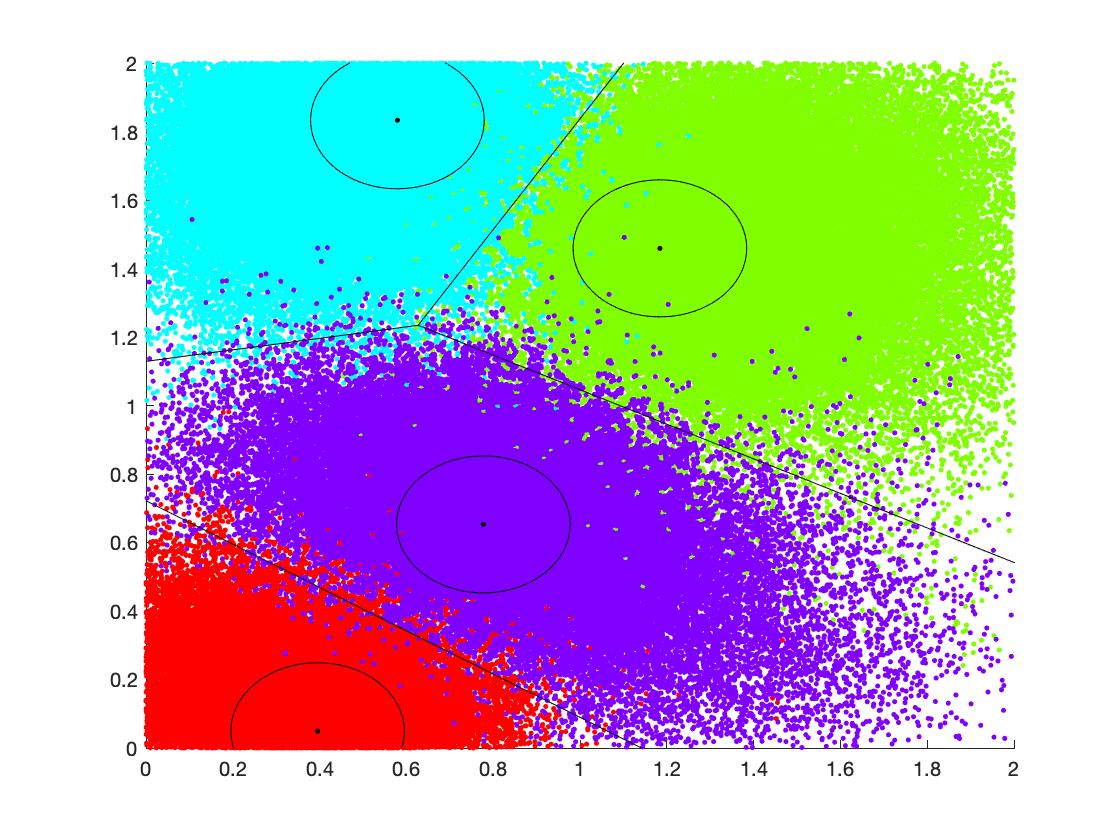}\\
  \caption{Simulation with four sources, 100 particles per source, 800 iterations.  }\label{voronoi4}
\end{figure}

\begin{figure}[ht]
\centering
  \includegraphics[width=\linewidth]{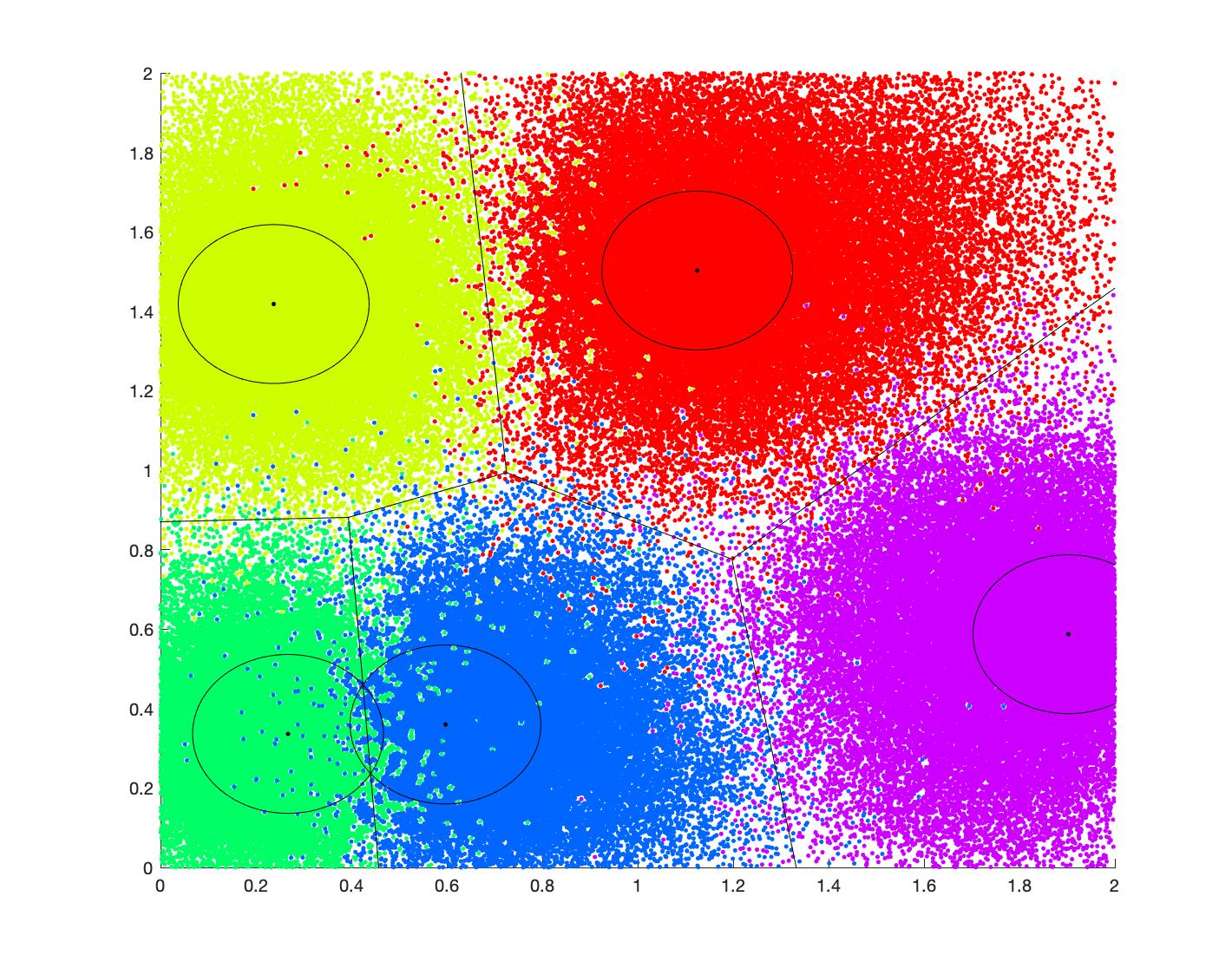}\\
  \caption{Simulation with five sources, 100 particles per source, 500 iterations.  }\label{voronoi5}
\end{figure}

\begin{figure}[ht]
\centering
  \includegraphics[width=\linewidth]{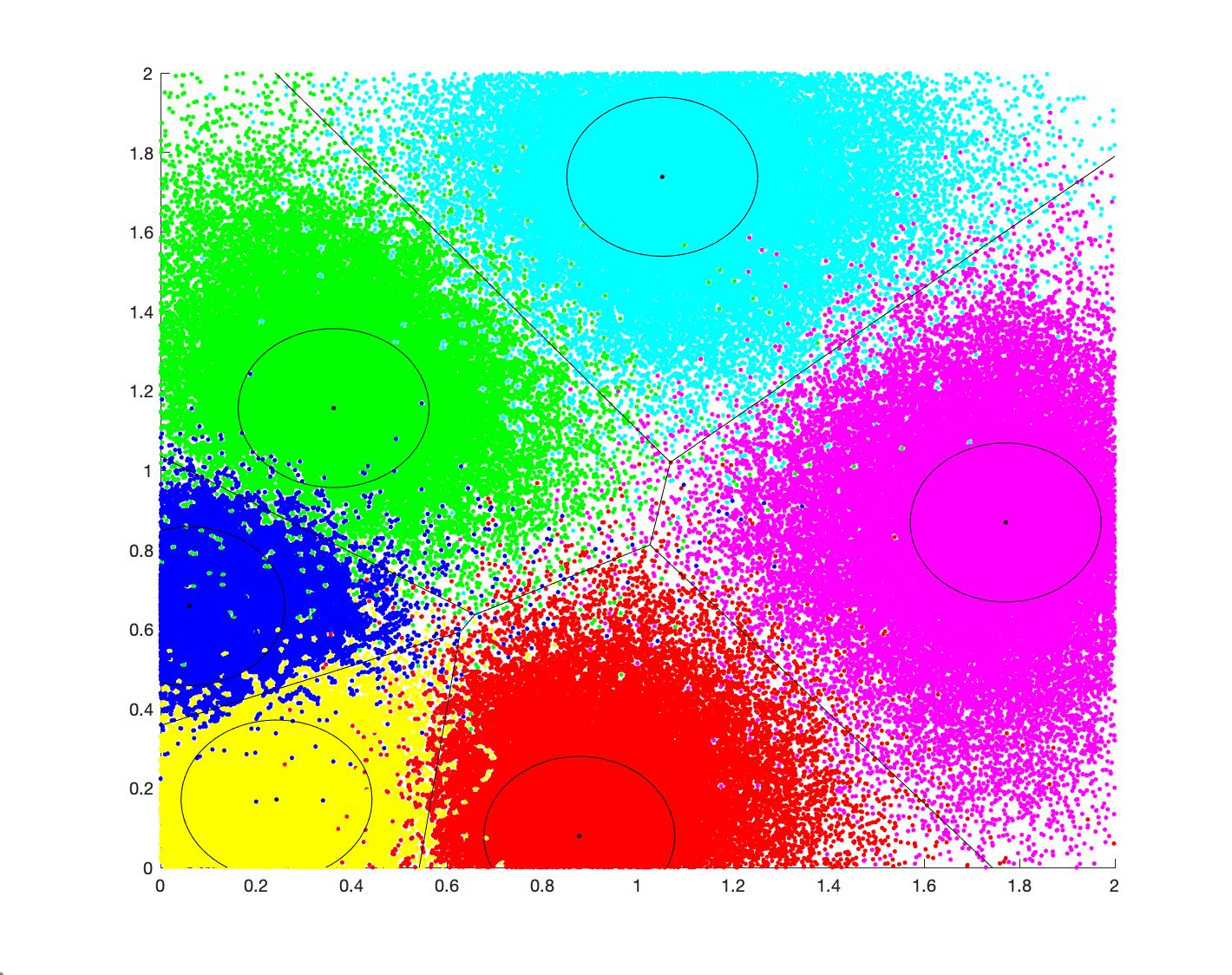}\\
  \caption{Simulation with six sources, 100 particles per source, 500 iterations.  }\label{voronoi6}
\end{figure}

All figures show that Voronoi cells are recovered from the random walkers following the abovementioned rules.
This colonization process takes place in biological and social systems and chemical reactions, as can be seen, for instance, in \cite{de2015calculating}, where a chemical reaction is documented starting with a diffusion front from $n$ points replicating the formation of Voronoi patterns on the plane. It has also been observed in colonies of bacteria starting to grow from different locations
in a Petri dish with limited resources that the end distribution of the population is a Voronoi tessellation. 
Finally, some authors have considered urban growth from different commercial centers and have found that the resulting neighborhoods correspond to  Voronoi cells. Voronoi diagrams have also been used in the context of social planning (for more details on the subject, see \cite{10.1007/978-90-481-3660-5_3}, and \cite{5362393}).
Considering models with nonlinear diffusive fronts in a more realistic scenario would be interesting, such as the KPP or the porous medium equation.

In what follows, we consider a problem motivated by the previous setting that will allow us to define a tessellation determined by a harmonic function. Before doing so, we notice that given a point $x$ on the plane,  another way of determining to what Voronoi cell it belongs consists in following the steepest ascent path determined by the distance function to the set $\bigcup_{i=1}^n \{x_i\}$. For almost all points on the plane, this path will end at the $x_i$, precisely the cell to which $x$ belongs. So let $u$ be a solution to the problem.

\begin{equation}
    \begin{split}
     \Delta u &= 0 \quad {\rm in}\quad \Omega \\
     u&=1 \quad {\rm on} \quad \partial\Omega\\
      \Omega&= \mathbb{R}^2\setminus \bigcup_{i=1}^n B_\varepsilon(x_i)\\
        u&\to 0 \quad{\rm as}\quad x \to\infty.
    \end{split}
\end{equation}

By the maximum principle, the maximum of $u$ is attained at the boundary of $\Omega$. Thus, if we follow the path of the steepest ascent starting at a given arbitrary x, it will lead to a point in $B_\varepsilon(x_i)$ for some $i$, for almost every point on the plane.
For $\varepsilon$ sufficiently small, this procedure provides the same result for a given $x$, determining a tessellation that, for simplicity, we call a harmonic tessellation.

 \section{Voronoi patterns and the heat equation}
 This section considers a mathematical model for the experiment presented in \cite{de2015calculating}. In this experiment, $n$ different diffusive sources are simultaneously placed on a Petri dish. As a result, the diffusion front can be tracked visually and stops whenever it meets a front coming from another source. Notice that it is a very similar process to the one described in the previous section.
As a first approximation, we propose a linear heat equation to model this phenomenon and prove that for sufficiently short times, the effective concentration associated with a diffusion source is restricted to the corresponding  Voronoi cell (see theorem \ref{heateqandvoronoicells}). As mentioned in the previous section, we suggest improving our model, including a nonlinear term, and consider, for instance, a Fisher-KPP type equation.
The main result in this section is the following theorem.
\begin{theorem}\label{heateqandvoronoicells}
Let $M^{n}$ be a smooth Riemannian manifold of dimension $n$, and let $\{ x_{i} \}_{i=1}^{N}$ be $N$ distinct points in $M^{n},$ let $\{ V_{i} \}_{i=1}^{N}$ be the corresponding Voronoi cells. 

Consider  $u(x,t)$ the solution of the heat equation with the initial condition
\begin{equation}
    u(x,0) = \sum_{i=1}^{N} w_{i} \delta_{x_{i}},
\end{equation}
where $w_{i} >0.$

Let $S^{n}$ be the $n-1$ unit sphere in $n$ dimensions and let $\theta \in S^{n}.$ Assume that $V_{i}$ is compact and contained in the complement of the cut locus of $x_{i}$ (see \cite{petersen2006riemannian}, section 9.2 for the definition of cut locus). Then for all $\delta, \epsilon > 0$ there exists $T>0$ such that for all $t < T,$ and for every $i,$
\begin{equation}
    u(\text{exp}_{x_{i}}( s \theta),t)
\end{equation}
is decreasing in $s$ as long as $s > \delta$ and 
\begin{equation}
    d(\text{exp}_{x_{i}}( s \theta), \partial V_{i}) > \epsilon.
\end{equation}

Furthermore, $T$ can be chosen as a continuous and monotone (in each component) function of $\delta$ and $\epsilon.$

\end{theorem}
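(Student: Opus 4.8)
The plan is to write the solution explicitly as a superposition of heat kernels and to exploit the short-time (Minakshisundaram--Pleijel / Varadhan) asymptotics of the heat kernel away from the cut locus, where the Gaussian factor $e^{-d(x,y)^2/4t}$ governs everything to leading order. Since $u(x,0)=\sum_i w_i\delta_{x_i}$, we have $u(x,t)=\sum_{j=1}^N w_j K(x,x_j,t)$, with $K$ the heat kernel of $M^n$. Fix $i$ and a direction $\theta\in S^{n}$, set $p=p(s)=\exp_{x_i}(s\theta)$, and restrict $s$ to the range where $p\in V_i$, $s>\delta$, and $d(p,\partial V_i)>\epsilon$. Because $V_i$ is compact and contained in the complement of the cut locus of $x_i$, the geodesic $s\mapsto p(s)$ is minimizing, so $d(p,x_i)=s$ exactly and $\nabla_p d(\cdot,x_i)$ is the unit radial field; moreover the expansion
\[
K(p,x_j,t)=(4\pi t)^{-n/2}e^{-d(p,x_j)^2/4t}\bigl(H_j(p)+O(t)\bigr),\qquad H_j>0,
\]
holds uniformly for $p$ in a neighborhood of $V_i$, together with the companion gradient estimate $\nabla_p\log K(p,x_j,t)=-\tfrac{d(p,x_j)}{2t}\nabla_p d(\cdot,x_j)+O(1)$ as $t\to 0$.

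The geometric heart of the argument is a \emph{uniform gap lemma}: there is an increasing modulus $\rho(\epsilon)>0$ with $\rho(0^+)=0$ such that
\[
d(p,\partial V_i)>\epsilon \ \Longrightarrow\ \min_{j\neq i}\bigl(d(p,x_j)^2-d(p,x_i)^2\bigr)\ \ge\ \rho(\epsilon).
\]
This follows from compactness of $V_i$ and finiteness of the sources: the function $F(q)=\min_{j\neq i} d(q,x_j)-d(q,x_i)$ is Lipschitz, vanishes exactly on $\partial V_i$, is positive on the interior, and admits a lower bound in terms of the distance to its zero set on the compact cell. Combined with $s>\delta$, this makes each competing term exponentially subordinate to the $i$-th one: the ratios $K(p,x_j,t)/K(p,x_i,t)$ and the ratios of the corresponding radial derivatives are both $O\bigl(t^{-1}e^{-\rho(\epsilon)/4t}\bigr)\to 0$.

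Differentiating along $\gamma$, writing $\partial_s$ for the radial derivative and using $\partial_s d(p,x_i)=1$, the $i$-th term contributes
\[
\partial_s\bigl(w_i K(p,x_i,t)\bigr)= -\,\frac{s}{2t}\,w_i K(p,x_i,t)\bigl(1+O(t)\bigr),
\]
which is strictly negative and, by $s>\delta$ and the uniform positive lower bound on $H_i$ over the compact $V_i$, is bounded above by $-c\,\delta\,t^{-1}(4\pi t)^{-n/2}e^{-s^2/4t}$ for some $c>0$. The remaining derivatives $\partial_s\bigl(w_j K(p,x_j,t)\bigr)$, $j\neq i$, are dominated via the gap lemma by a factor $O\bigl(t^{-1}e^{-\rho(\epsilon)/4t}\bigr)$ relative to the leading term, hence negligible. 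Summing, $\partial_s u(p(s),t)<0$ as soon as $t$ is small enough that the leading drift $\tfrac{\delta}{2t}$ dominates both the $O(1)$ self-correction and the exponentially small competing terms. Uniformity over all $i$, all $\theta$, and all admissible $s$ then follows from compactness of the cells and of the sphere and from $N<\infty$; inspecting the threshold shows it increases when $\delta$ grows (stronger drift) and when $\epsilon$ grows (larger gap $\rho$), so $T=T(\delta,\epsilon)$ may be taken continuous and monotone in each argument.

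I expect the main obstacle to be the uniform gap lemma together with the uniform term-by-term differentiation of the short-time asymptotics. Converting ``$p$ is $\epsilon$-far from $\partial V_i$'' into a quantitative lower bound on $d(p,x_j)^2-d(p,x_i)^2$ requires controlling the non-degeneracy of $F$ near its zero set on a general manifold, where the bisectors are not hyperplanes, and the differentiation of the Minakshisundaram--Pleijel expansion must be justified with remainders uniform in $p\in V_i$ and in $\theta$. These are precisely the points where the compactness of $V_i$ and the exclusion of the cut locus of $x_i$ are essential.
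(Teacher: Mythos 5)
Your architecture is the same as the paper's: write $u(\cdot,t)=\sum_j w_j p_t(x_j,\cdot)$, prove a uniform gap lemma (your $\rho(\epsilon)$ is exactly the paper's $\varphi(\epsilon)$, obtained by the same compactness argument on $F(q)=\min_{j\neq i}d(q,x_j)-d(q,x_i)$), show that the $i$-th kernel's radial derivative is negative of size $\sim \frac{s}{t}\,p_t$, beat the competing terms by the exponential factor $e^{-\rho(\epsilon)/4t}$, and read off continuity and monotonicity of $T$ from those of the gap modulus. The paper's proof is organized into precisely these pieces (its Lemmas on comparison of distances, upper bounds on derivatives, and lower bounds on derivatives).

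There is, however, a genuine flaw in how you control the competing terms. You invoke the Minakshisundaram--Pleijel expansion and the gradient asymptotic $\nabla_p \log K(p,x_j,t) = -\tfrac{d(p,x_j)}{2t}\nabla_p d(\cdot,x_j)+O(1)$ ``uniformly for $p$ in a neighborhood of $V_i$'' for \emph{every} $j$. For $j=i$ this is legitimate: the hypothesis places $V_i$ in the complement of the cut locus of $x_i$. For $j\neq i$ it is not: the hypotheses say nothing about the cut loci of the other sources, and in general the cut locus of $x_j$ \emph{does} intersect $V_i$ --- on the round sphere with two non-antipodal sources, the antipode of $x_2$ lies in the interior of $V_1$. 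At such points the expansion and the $O(1)$ gradient correction fail (the prefactor degenerates and $d(\cdot,x_j)$ is not even differentiable), so your claim that the ratios of radial derivatives are $O\bigl(t^{-1}e^{-\rho(\epsilon)/4t}\bigr)$ is unjustified as stated; you flag uniformity issues only for the cut locus of $x_i$, which is the one place where there is no problem. This is exactly where the paper's proof is deliberately asymmetric: for the terms $j\neq i$ it uses only globally valid estimates --- a Gaussian upper bound of the form $p_t \leq C t^{-(n-1/2)}e^{-d^2/4t}$ (a worse power of $t$, which costs nothing) together with Hsu's gradient estimate $|\nabla \log p_t| \leq C\bigl[\,d/t + t^{-1/2}\bigr]$, neither of which requires avoiding any cut locus --- and reserves the cut-locus-dependent Varadhan/Malliavin--Stroock asymptotics for the single dominant term $j=i$. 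Your argument is repaired by making the same substitution: bound $|\nabla_p K(p,x_j,t)|$ for $j\neq i$ by a global Gaussian-type gradient estimate instead of differentiating an expansion that is not valid there; the gap lemma, the dominant-term estimate, and the choice of $T$ then go through as you describe.
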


\begin{corollary}
\label{Cor}
Let $\epsilon, \delta, T$ be as above. Let $\gamma(s)$ be a smooth path joining $x_{i}$ and $x_{j}$ for $i \neq j.$ Assume that, whenever $\gamma(s) \in V_{i},$ that 
\begin{equation}
    \frac{d}{ds} d(\gamma(s), x_{i}) \neq 0.
\end{equation}

Then any local minimums of $u(\gamma(s), t)$ occur when
\begin{equation}
    d\left(\gamma(s), \bigcup_{i=1}^{N} \partial V_{i} \right) \leq \epsilon.
\end{equation}
\end{corollary}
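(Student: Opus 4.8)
The plan is to argue by contradiction. Suppose $s_0$ is a local minimum of $f(s):=u(\gamma(s),t)$ with $t<T$, and suppose that $d(\gamma(s_0),\bigcup_{k}\partial V_k)>\epsilon$. Then $\gamma(s_0)$ lies in the interior of a single cell $V_k$ (I read the hypothesis $\frac{d}{ds}d(\gamma(s),x_k)\neq 0$ as applying to whichever cell currently contains $\gamma(s)$; near the endpoints $k=i$ or $k=j$) and $d(\gamma(s_0),\partial V_k)>\epsilon$. By continuity there is an interval $I\ni s_0$ on which $\gamma(s)\in\mathrm{int}\,V_k$ with $d(\gamma(s),\partial V_k)>\epsilon$, and on which the radial coordinate $r(s):=d(\gamma(s),x_k)$ is strictly monotone, since $r'$ is continuous and $r'(s_0)\neq 0$. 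I would first dispose of the subregion $r\le\delta$ near the source, which Theorem \ref{heateqandvoronoicells} does not cover: there $u(\cdot,t)$ is overwhelmingly dominated by the single-source heat kernel centred at $x_k$, which is strictly decreasing in $r$, so with $r$ monotone along $\gamma$ the restriction $f$ is monotone and has no interior local minimum. Hence we may assume $r(s_0)>\delta$, placing $\gamma(s_0)$ in the region governed by the theorem.

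Next I would record what the theorem, together with the short-time heat-kernel estimates used to prove it, yields on $I$. Writing $u(x,t)=w_k\,p_t(x,x_k)+R(x,t)$, where $p_t$ is the heat kernel of $M^n$ and $R$ collects the contributions of the other sources, the Gaussian short-time behaviour $p_t(x,x_k)\sim(4\pi t)^{-n/2}\exp(-r^2/4t)$ makes $w_k p_t(\cdot,x_k)$ a strictly radially decreasing function of $r$ about $x_k$ on the compact set $\overline{V_k}$ (which lies off the cut locus), while $R$ and its gradient are exponentially smaller on $I$, since every other source is farther away by a definite margin once $d(\cdot,\partial V_k)>\epsilon$. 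Consequently
\begin{equation}
\nabla u(\gamma(s),t)=g_t'(r)\,\nabla r+\mathcal{E}(s),\qquad g_t \text{ strictly decreasing},\ |\mathcal{E}|\ll|g_t'(r)|,
\end{equation}
for $t$ small; that is, to leading order $\nabla u$ points in the $-\nabla r$ direction.

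With this in hand the conclusion is immediate. Differentiating along $\gamma$ gives
\begin{equation}
f'(s)=\langle\nabla u(\gamma(s),t),\gamma'(s)\rangle=g_t'(r(s))\,r'(s)+\langle\mathcal{E}(s),\gamma'(s)\rangle .
\end{equation}
Since $|r'(s_0)|\ge c>0$ on $I$ and $g_t'(r)<0$ is bounded away from $0$ while the error term is negligible for $t<T$, the right-hand side is nonzero at $s_0$. Thus $f'(s_0)\neq 0$, contradicting that $s_0$ is a local minimum (indeed, any critical point). Therefore every local minimum of $f$ satisfies $d(\gamma(s),\bigcup_{k}\partial V_k)\le\epsilon$, as claimed; shrinking $T$ if necessary, using that it may be chosen monotone in $\delta$ and $\epsilon$, makes all estimates hold simultaneously on the compact image of $\gamma$.

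The step I expect to be the genuine obstacle is the one hidden in the second paragraph: upgrading the theorem's conclusion, which is the scalar statement that $u$ decreases along each fixed geodesic ray, to control of the \emph{direction} of $\nabla u$ along the oblique path $\gamma$. Radial monotonicity alone does not suffice, for a function that decreases along every ray from $x_k$ without being radially symmetric can still develop a local minimum along a curve with $r'\neq 0$; the hypothesis $\frac{d}{ds}d(\gamma(s),x_k)\neq 0$ forces the contradiction only once one knows that $u$ is \emph{nearly radially symmetric} about the dominant source. Making the bound $|\mathcal{E}|\ll|g_t'|$ uniform over the compact region $\{d(\cdot,\partial V_k)>\epsilon,\ r>\delta\}$, with a threshold $T$ depending continuously and monotonically on $\delta$ and $\epsilon$, is where the real work lies; it should follow from the same Minakshisundaram--Pleijel and Varadhan short-time asymptotics, now applied to $\nabla u$, that underlie Theorem \ref{heateqandvoronoicells}.
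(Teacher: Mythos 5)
The paper never actually proves this corollary: it is stated immediately after Theorem \ref{heateqandvoronoicells} and treated as self-evident, with the subsequent proofs devoted only to the theorem itself, so there is no official argument to compare against line by line. Judged on its own, your proposal gets the essential point right, and right where the paper is silent: the theorem's conclusion (monotonicity of $u$ along radial geodesics $s\mapsto\exp_{x_k}(s\theta)$) does not formally imply anything about $\frac{d}{ds}u(\gamma(s),t)=\langle\nabla u,\gamma'\rangle$ along an oblique path, since the theorem controls no angular information. Your fix --- rerun the theorem's proof along $\gamma$, using that $\nabla u$ is asymptotically radial about the dominant source --- is exactly what is needed, and its key ingredient is already in the paper: in the proof of Lemma \ref{lowerboundonderivatives}, Corollary 2.29 of \cite{malliavin1996short} gives $\nabla L_{x_k}^{t}\to-\nabla_{y}E$ uniformly on compacts off the cut locus, whence $t\,\frac{d}{ds}\log p_{t}(x_k,\gamma(s))=-r(s)r'(s)+o(1)|\gamma'(s)|$; combining this with Lemmas \ref{comparisonofdistances} and \ref{upperboundonderivatives} for the other sources, and with $|r r'|\geq \delta c>0$ on the compact parameter set where $\gamma(s)\in V_k$, $d(\gamma(s),\partial V_k)\geq\epsilon$, $r(s)\geq\delta$, yields $f'(s)\neq 0$ there. (One caveat you only half-acknowledge: the resulting smallness threshold for $t$ depends on $\min|r'|$ and $\max|\gamma'|$, i.e.\ on the path $\gamma$ itself, so the corollary's ``$T$ as above'' must genuinely be shrunk in a path-dependent way, not merely via monotonicity in $\delta$ and $\epsilon$.)

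The genuine gap is your first step, the region $r(s)\leq\delta$, which the theorem does not cover and which the corollary's conclusion does not exclude. Your claim that there $u$ is ``overwhelmingly dominated by the single-source heat kernel, which is strictly decreasing in $r$,'' so that $f$ is monotone, does not hold up. First, on a general manifold $p_{t}(x_k,\cdot)$ is not a function of $r$, so ``strictly decreasing in $r$'' is not available. Second, and more seriously, dominance of $u$ in \emph{value} does not give dominance of its \emph{derivative}, which is what monotonicity of the sum requires: the own-source contribution to $f'$ is of size $\frac{r r'}{2t}\,p_{t}(x_k,\gamma(s))$ and vanishes as $r\to 0$, so sufficiently close to $x_k$ it is beaten by the $O(1)$ curvature correction $\nabla\log u_{0}$ from the parametrix expansion (on a curved manifold), and at exponentially small $r$ even by the other sources' gradient contributions. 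Hence $f$ need not be monotone on $\{r\leq\delta\}$, and critical points at distance $O(t)$ from the source can occur. The conclusion can still be rescued --- for instance, by showing $f''<0$ throughout a small neighborhood of the source (the term $-\frac{(r')^{2}}{2t}p_{t}$ dominates there), so that the at most one critical point near $x_k$ is a local maximum --- but that is an additional second-order heat-kernel argument, not a triviality. Alternatively, the clean repair is to weaken the conclusion to: local minima occur only where $d\left(\gamma(s),\bigcup_{i=1}^{N}\partial V_{i}\right)\leq\epsilon$ or $\min_{k}d(\gamma(s),x_{k})\leq\delta$, which is what the theorem plus your main-region argument actually delivers.
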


\begin{remark}
    We do not expect that Theorem \ref{heateqandvoronoicells} and Corollary \ref{Cor} are true for general $t$, only small $t$. This is because, for large $t$, $u$ approaches a constant function (see \cite{garcia2019concentration}). 

    Understanding more quantitatively the dependence of $T$ on $\epsilon$ and $\delta$ would require a more quantitative understanding of the function $\varphi$ in Lemma \ref{comparisonofdistances}.
\end{remark}

We need three lemmas to give the proof of the Theorem \ref{heateqandvoronoicells}. 
\begin{lemma}\label{comparisonofdistances}
Let $M^{n}$ be a smooth compact manifold, and let $\{ x_{i} \}_{i=1}^{N}$ be $N$ distinct points in $M^{n}.$ Let $\{ V_{i} \}_{i=1}^{N}$ be the corresponding Voronoi cells. 

Let 
\begin{equation}
    B_{i}^{\epsilon} = \bigcup_{x \in \partial V_{i}} B(x, \epsilon) 
\end{equation}
and let
\begin{equation}
    V_{i}^{\epsilon} = V_{i} \setminus B_{i}^{\epsilon}. 
\end{equation}

Assume that $V_{i}$ is bounded. Then there exists a strictly positive, non-increasing continuous function $\varphi(\epsilon)$ such that for any $x \in V_{i}^{\epsilon}$ and any $j 
\neq i,$
\begin{equation}
    d^{2}(x, x_{j}) \geq d^{2}(x, x_{i}) + \varphi(\epsilon).
\end{equation}
\end{lemma}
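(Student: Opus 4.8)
The plan is to reduce the stated inequality to a statement about a single continuous ``margin'' function and then extract the uniform gap by a compactness argument. For each $i$ define
\[
  g_i(x) = \min_{j \neq i}\bigl( d^2(x,x_j) - d^2(x,x_i)\bigr),
\]
which is continuous on $M^n$, being a finite minimum of continuous functions (the distance to a fixed point, and hence its square, is continuous). By the strict-inequality definition of the Voronoi cell, $g_i > 0$ on the open set $V_i$, while on $\partial V_i$ at least two sites are equidistant from $x$, so $g_i = 0$ there. Since $d^2(x,x_j) - d^2(x,x_i) \geq g_i(x)$ for \emph{every} $j \neq i$, proving the lemma amounts to bounding $g_i$ below by a positive quantity controlled by the distance of $x$ from $\partial V_i$.

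The heart of the argument is compactness. Because $M^n$ is compact (equivalently, $V_i$ is bounded), the closure $\overline{V_i}$ is compact, and for each $\epsilon>0$ the set
\[
  K_i^\epsilon = \{\, x \in \overline{V_i} : d(x,\partial V_i) \geq \epsilon \,\}
\]
is closed in $\overline{V_i}$, hence compact. Every $x \in K_i^\epsilon$ sits at positive distance from $\partial V_i$, so $x \in V_i$ and $g_i(x)>0$; by continuity $g_i$ attains a strictly positive minimum on $K_i^\epsilon$, which I call $\varphi_i(\epsilon)>0$. Since $V_i^\epsilon = V_i \setminus B_i^\epsilon = \{\, x \in V_i : d(x,\partial V_i) \geq \epsilon \,\} \subseteq K_i^\epsilon$, the bound $g_i(x) \geq \varphi_i(\epsilon)$ holds on all of $V_i^\epsilon$. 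Taking $\varphi(\epsilon) = \min_{1 \leq i \leq N} \varphi_i(\epsilon)$, which is still strictly positive as a finite minimum of positive numbers, produces a single function valid for every cell at once, yielding exactly $d^2(x,x_j) \geq d^2(x,x_i) + \varphi(\epsilon)$.

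It remains to record the regularity of $\varphi$. Monotonicity is immediate from the nesting $K_i^{\epsilon_1} \supseteq K_i^{\epsilon_2}$ for $\epsilon_1 \leq \epsilon_2$: minimizing over a smaller set can only raise the value, so each $\varphi_i$, and hence $\varphi$, is \emph{non-decreasing} in $\epsilon$. This is the natural direction geometrically (points farther from the boundary enjoy a larger margin), and it is precisely this monotonicity that will feed the short-time heat-kernel estimate in the proof of Theorem~\ref{heateqandvoronoicells}; I would therefore work with the non-decreasing version rather than the literal ``non-increasing'' wording. For continuity I would use that $\epsilon \mapsto K_i^\epsilon$ varies continuously in the Hausdorff sense together with the uniform continuity of $g_i$ on the compact $\overline{V_i}$ to rule out jumps in $\varphi_i$; alternatively, since only a \emph{valid} positive lower bound is required, one may simply replace $\varphi$ by any continuous positive minorant, and restrict attention to the range of $\epsilon$ for which the cores $V_i^\epsilon$ are nonempty. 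The main obstacle is the bookkeeping at the compactness step, namely that $V_i^\epsilon$ is itself neither open nor closed, which is why I pass to the auxiliary compact set $K_i^\epsilon$ and verify the two inclusions $V_i^\epsilon \subseteq K_i^\epsilon \subseteq V_i$ so that $g_i$ is at once bounded below on $V_i^\epsilon$ and strictly positive where its minimum is attained; the continuity of $\varphi$ in $\epsilon$ is the only genuinely fussy point and is dispatched by the minorant trick.
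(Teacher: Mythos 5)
Your proof is correct and is in essence the same compactness argument the paper uses: introduce a margin function (the paper's $\Phi(x)=\min_{j\neq i}\{d(x,x_j)-d(x,x_i)\}$, your $g_i$ with squared distances), minimize it over the compact $\epsilon$-core of the cell, and observe that the minimum is strictly positive. Your variant is in fact cleaner at two points. First, positivity: you note that the core sits inside the open cell $V_i$, where the margin is pointwise positive by the strict-inequality definition of $V_i$, so compactness immediately gives a positive minimum; the paper instead runs a contradiction argument (producing a point $y\in B(x,\epsilon)$ strictly closer to $x_j$) that is not needed, since $x\in V_i^{\epsilon}\subset V_i$ already contradicts $\Phi(x)=0$. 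Second, and more substantively, you are right about the monotonicity direction: the sets $V_i^{\epsilon}$ shrink as $\epsilon$ grows, so the extracted bound is non-decreasing; moreover no strictly positive \emph{non-increasing} $\varphi$ can exist when $\partial V_i\neq\emptyset$, because any admissible bound is dominated by the infimum of the margin over $V_i^{\epsilon}$, which tends to $0$ as $\epsilon\to 0$. The paper's wording is an error, and its own displayed inequalities (e.g. $\varphi(\epsilon)\geq\varphi(\epsilon_n)$ for $\epsilon_n<\epsilon$) establish exactly the non-decreasing property you state. The only soft spot in your write-up is continuity: the Hausdorff-continuity route is left vague (and note that the paper's own right-continuity step, which picks $x_n\in V_i^{\epsilon_n}$ converging to the minimizer $x\in V_i^{\epsilon}$, is itself unjustified for general cell geometries), but your fallback minorant trick genuinely works and sidesteps the issue: since a smaller bound keeps the inequality valid, you may replace your non-decreasing positive $\varphi$ by, say, $\tilde\varphi(\epsilon)=2\int_{1/2}^{1}\varphi(\epsilon u)\,du$, which is continuous, non-decreasing, and satisfies $0<\varphi(\epsilon/2)\leq\tilde\varphi(\epsilon)\leq\varphi(\epsilon)$; recording this one-line construction would close the gap completely.
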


\begin{proof}
Consider the function 
\begin{equation}
    \Phi(x) = \min_{j \neq i} \{d(x,x_{j}) - d(x, x_{i})\}.
\end{equation}

Since $V_{i}$ is bounded, $V_{i}^{\epsilon}$ is compact; and therefore $\Phi$ achieves a minimum in $V_{i}^{\epsilon},$ since $\Phi$ is clearly continuous. By definition of the Voronoi cell, we have that
\begin{equation}
    \Phi(x) \geq 0,
\end{equation}
and therefore
\begin{equation}
    \min_{ y \in V_{i}^{\epsilon}} \Phi(y) \geq 0.
\end{equation}

We now claim that 
\begin{equation}
    \min_{ y \in V_{i}^{\epsilon}} \Phi(y) >0.
\end{equation}

To see this, proceed by contradiction, and assume that 
\begin{equation}
    \min_{y\in V_{i}^{\epsilon}} \Phi(y) = 0.
\end{equation}

Then there exists $x \in V_{i}^{\epsilon},$ and $j \neq i$ such that
\begin{equation}
    d(x, x_{j}) = d(x, x_{i}).
\end{equation}

Since $x_{i} \neq x_{j}$ this implies that there exists $y \in B(x, \epsilon)$ such that 
\begin{equation}
    d(y, x_{j}) < d(y, x_{i}),
\end{equation}
but this is a contradiction because $B(x, \epsilon) \in V_{i}$ and therefore 
\begin{equation}
    d(y, x_{j}) \geq d(y, x_{i}).
\end{equation}

Let 
\begin{equation}
    \alpha(\epsilon) = \min_{y \in V_{i}^{\epsilon}} \Phi(y),
\end{equation}
then for any $x \in V_{i}^{\epsilon},$
\begin{equation}
    \begin{split}
        d^{2}(x, x_{j}) &\geq (d(x, x_{i}) + \alpha(\epsilon))^{2} \\
        & = d^{2}(x, x_{i}) + 2  \alpha(\epsilon)d(x, x_{i}) + \alpha(\epsilon)^{2} \\
        & > d^{2}(x, x_{i}) + \alpha(\epsilon)^{2}.
    \end{split}
\end{equation}

Then
\begin{equation}
    \varphi(\epsilon) = \alpha^{2}(\epsilon)
\end{equation}
satisfies that
\begin{equation}
    d^{2}(x, x_{j}) \geq d^{2}(x, x_{i}) + \varphi(\epsilon).
\end{equation}

$\varphi$ is non-increasing. We will show that it is continuous by showing that it is continuous from the right and the left. We will first show continuity from the left. Let 
\begin{equation}
    \epsilon_{n} \to \epsilon
\end{equation}
monotonically from the left. Clearly 
\begin{equation}
    \varphi(\epsilon) \geq \lim_{n \to \infty} \varphi(\epsilon_{n})
\end{equation}
since 
\begin{equation}
    V_{i}^{\epsilon} \subset V_{i}^{\epsilon_{n}}
\end{equation}
and therefore
\begin{equation}
    \varphi(\epsilon) \geq \varphi(\epsilon_{n}).
\end{equation}

Now let
\begin{equation}
    x_{n} = \text{argmin}_{y \in V_{i}^{\epsilon_{n}}} \Phi(y).
\end{equation}

Modulo a subsequence, 
\begin{equation}
    x_{n} \to x \in V_{i}^{\epsilon},
\end{equation}
and by continuity of $\Phi,$
\begin{equation}
    \begin{split}
        \alpha(\epsilon) &\leq \Phi^{2}(x) \\
        &= \lim_{n \to \infty} \Phi^{2}(x_{n}) \\
        &= \lim_{n \to \infty} \alpha(\epsilon).
    \end{split}
\end{equation}

Hence,
\begin{equation}
    \varphi(\epsilon) \leq \lim_{n \to \infty} \varphi(\epsilon_{n}),
\end{equation}
therefore $\varphi$ is continuous from the left. To show that it is continuous from the right, let
\begin{equation}
    \epsilon_{n} \to \epsilon
\end{equation}
from the right. Then
\begin{equation}
    V_{i}^{\epsilon_{n}} \subset V_{i}^{\epsilon}
\end{equation}
and 
\begin{equation}
    \varphi(\epsilon) \leq \lim_{n\to \infty} \varphi(\epsilon_{n}).
\end{equation}

Let
\begin{equation}
    x = \text{argmin}_{y \in V_{i}^{\epsilon}} \Phi(y),
\end{equation}
and let $x_{n} \in V_{i}^{\epsilon_{n}}$ be such that 
\begin{equation}
   \lim_{n \to \infty} x_{n} = x.
\end{equation}

Then
\begin{equation}
    \begin{split}
        \alpha(\epsilon) &= \Phi^{2}(x) \\
        &= \lim \Phi^{2}(x_{n}) \\
        &\geq  \lim_{n \to \infty} \min_{y \in V_{i}^{\epsilon_{n}}} \Phi^{2}(y)\\
        &= \lim_{n \to \infty} \alpha(\epsilon_{n}).
    \end{split}
\end{equation}

Therefore $\varphi$ is continuous from the right and the left and hence continuous. 
\end{proof}

Before giving the proof of the next lemma, we need the definition of the fundamental solution to the heat equation:
\begin{proposition}
(\cite{garcia2019concentration}, Proposition 4.1)
Let $M^{n}$ be an $n-$dimensional smooth manifold. There exists a unique differentiable function
\begin{equation}
    p: (0,\infty) \times M^{n} \times M^{n} 
\end{equation}
denoted by $p_{t}(x,y)$ such that
\begin{equation}
    \frac{\partial}{\partial t} p_{t}(x,y) = \Delta_{y} p_{t}(x,y)
\end{equation}
and 
\begin{equation}
    \lim_{t \to 0} p_{t}(x, \cdot ) = \delta_{x}
\end{equation}
for every $x,y \in M^{n}$ and $t > 0$, where $\Delta$ is the Laplace-Beltrami operator. Such a function will be called the heat kernel for $\Delta$. It is
non-negative, it is mass preserving, i.e.
\begin{equation}
    \int_{M^{n}} p_{t}(x,y) d \pi(y) = 1
\end{equation}
for every $x \in M^{n}$ and $t > 0$, it is symmetric, i.e.
\begin{equation}
    p_{t}(x,y) = p_{t}(y,x)
\end{equation}
for every $x,y \in M^{n}$ and $t >0$ and it satisfies the semi-group property i.e.
\begin{equation}
    \int_{M^{n}} p_{t}(x,y) p_{s}(y,z) d \pi (y) = p_{t+s}(x,z)
\end{equation}
for every $x,y \in M^{n}$ and $t,s>0$. 
\end{proposition}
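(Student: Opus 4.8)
The plan is to construct the kernel explicitly, deduce uniqueness from an energy estimate, and then read off each of the four asserted properties either directly from the construction or from uniqueness. When $M^{n}$ is compact the cleanest construction is spectral: the Laplace--Beltrami operator $-\Delta$ is a nonnegative, essentially self-adjoint operator on $L^{2}(M^{n}, d\pi)$ with discrete spectrum $0 = \lambda_{0} \le \lambda_{1} \le \cdots \to \infty$ and a complete orthonormal basis of smooth eigenfunctions $\{\phi_{k}\}$. I would set
\[
    p_{t}(x,y) = \sum_{k=0}^{\infty} e^{-\lambda_{k} t}\, \phi_{k}(x)\, \phi_{k}(y),
\]
use Weyl's asymptotics $\lambda_{k} \sim c_{n} k^{2/n}$ together with elliptic sup-bounds on the $\phi_{k}$ to show that the series and all of its $x$-, $y$-, and $t$-derivatives converge locally uniformly for $t > 0$, so that $p_{t}$ is smooth and may be differentiated term by term; the heat equation $\partial_{t} p_{t} = \Delta_{y} p_{t}$ then holds because each summand satisfies it. The initial condition $\lim_{t \to 0} p_{t}(x, \cdot) = \delta_{x}$ is exactly the completeness (reproducing-kernel) relation for $L^{2}$. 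For noncompact $M^{n}$ I would instead use the Minakshisundaram--Pleijel parametrix, writing an approximate kernel
\[
    H^{(N)}_{t}(x,y) = (4\pi t)^{-n/2} e^{-d(x,y)^{2}/4t} \sum_{j=0}^{N} u_{j}(x,y)\, t^{j}
\]
near the diagonal, solving the transport equations for the coefficients $u_{j}$ recursively, and converting $H^{(N)}$ into an exact kernel by Duhamel's principle (Levi's convolution iteration); this is the route taken in the cited reference.

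Uniqueness and symmetry come next. If $p$ and $\tilde p$ both solve the heat equation with initial data $\delta_{x}$, their difference $w$ solves the equation with zero initial data, and the energy identity $\frac{d}{dt}\int_{M^{n}} w^{2}\, d\pi = -2\int_{M^{n}} |\nabla w|^{2}\, d\pi \le 0$ shows that $\| w(\cdot,t)\|_{L^{2}}$ is nonincreasing from an initial value of $0$, forcing $w \equiv 0$. Symmetry $p_{t}(x,y) = p_{t}(y,x)$ then follows because $\Delta$ is formally self-adjoint with respect to $d\pi$, so the semigroup $e^{t\Delta}$ is self-adjoint on $L^{2}$ and its integral kernel must be symmetric; equivalently, differentiating $g(s) = \int_{M^{n}} p_{s}(x,z)\, p_{t-s}(y,z)\, d\pi(z)$ in $s$ and applying Green's identity gives $g'(s) \equiv 0$, whence $g(0) = g(t)$, which is precisely the symmetry relation.

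The remaining three properties follow from the minimum principle and from uniqueness. Nonnegativity is the parabolic minimum principle: the solution evolving from the nonnegative measure $\delta_{x}$ stays nonnegative, and strict positivity can be upgraded via the strong maximum principle or a Harnack inequality. The semigroup (Chapman--Kolmogorov) identity is a uniqueness argument: fixing $s > 0$ and setting $q_{t}(x,z) = \int_{M^{n}} p_{t}(x,y)\, p_{s}(y,z)\, d\pi(y)$, one checks that $q_{t}$ solves $\partial_{t} q = \Delta_{z} q$ with $q_{t} \to p_{s}(x,\cdot)$ as $t \to 0$, and since $t \mapsto p_{t+s}(x,z)$ solves the same initial value problem, uniqueness gives $q_{t} = p_{t+s}$. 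Mass preservation on a compact manifold is immediate, since $\frac{d}{dt}\int_{M^{n}} p_{t}(x,y)\, d\pi(y) = \int_{M^{n}} \Delta_{y} p_{t}\, d\pi(y) = 0$ by the divergence theorem and the total mass starts at $1$.

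I expect the main obstacle to be the noncompact case. Establishing convergence of the parametrix series, justifying term-by-term differentiation and differentiation under the integral sign, and above all proving mass preservation (stochastic completeness) all require controlling the geometry of $M^{n}$ at infinity; these are precisely the points where one must invoke geodesic completeness together with curvature or volume-growth bounds, as is done in \cite{garcia2019concentration}. In the compact setting used elsewhere in this paper all three difficulties evaporate, so for our purposes the spectral construction above suffices.
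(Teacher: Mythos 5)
The paper offers no proof of this proposition: it is quoted verbatim as background from \cite{garcia2019concentration}, so there is no internal argument to compare yours against. The relevant comparison is with the standard literature, and your outline follows the classical route exactly (spectral expansion on compact $M^{n}$, Minakshisundaram--Pleijel parametrix plus Duhamel/Levi iteration in general), which is also the route of the cited reference. On a compact manifold your construction, the symmetry argument via self-adjointness (or via the interpolating function $g(s)$ and Green's identity), the semigroup property via uniqueness, nonnegativity via the parabolic minimum principle, and mass preservation via the divergence theorem are all correct and standard.

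There is, however, one genuine gap: the uniqueness step. You set $w = p - \tilde p$ and argue that $\frac{d}{dt}\int_{M^{n}} w^{2}\, d\pi = -2\int_{M^{n}} |\nabla w|^{2}\, d\pi \le 0$, so the norm is ``nonincreasing from an initial value of $0$.'' But the initial condition $p_{t}(x,\cdot) \to \delta_{x}$ holds only distributionally; $w(\cdot,t)$ need not tend to $0$ in $L^{2}$ as $t \to 0$, and the monotonicity works against you: as $t \downarrow 0$ the $L^{2}$ norm is non-decreasing, so its limit could a priori be strictly positive or infinite even though $w \to 0$ weakly --- each kernel blows up like $t^{-n/2}$ and you have no quantitative control on the cancellation. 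The standard repair is Duhamel's trick applied to the two kernels directly: for fixed $t$ and $\epsilon \le s \le t - \epsilon$ set
\begin{equation}
    h(s) = \int_{M^{n}} p_{s}(x,z)\, \tilde p_{t-s}(z,y)\, d\pi(z),
\end{equation}
show $h'(s) = 0$ by Green's identity, and then let $\epsilon \to 0$, using the distributional initial condition on each kernel separately (tested against the other, smooth, kernel) to identify $h(0^{+}) = \tilde p_{t}(x,y)$ and $h(t^{-}) = p_{t}(x,y)$. Note also that the proposition as literally stated, for an arbitrary smooth manifold, is false: uniqueness fails on incomplete manifolds and mass preservation (stochastic completeness) can fail even on complete manifolds with very fast volume growth. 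You flag this correctly at the end --- the hypotheses making the statement true live in the cited reference --- but the $t \to 0$ defect in your energy argument is separate from those geometric issues and persists even in the compact case, so it needs the repair above regardless.
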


\begin{lemma}\label{upperboundonderivatives}
Let $M$ be a manifold, and let $p_{t}(x,y)$ be the fundamental solution to the heat equation. Then 
\begin{equation}
    \log | \nabla_{y} p_{t} (x,y) | \leq C + \left( n - 1 \right) \log(t) - \frac{d^{2}(x,y)}{4t} + \log\left( d(x,y) \right),
\end{equation}
where the constant $C$ depends only on $M^{n}.$
\end{lemma}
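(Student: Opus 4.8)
The plan is to reduce the bound to the off-diagonal short-time (Minakshisundaram--Pleijel) asymptotic expansion of the heat kernel, which is exactly what produces the sharp constant $1/(4t)$ in the Gaussian factor. On the complement of the cut locus of $x$ the map $y \mapsto d(x,y)$ is smooth with $|\nabla_y d(x,y)| = 1$, and the kernel admits a representation
\[
p_t(x,y) = (4\pi t)^{-n/2}\, e^{-d^2(x,y)/(4t)}\,\bigl( a_0(x,y) + a_1(x,y)\,t + \cdots + a_k(x,y)\,t^k + R_k(x,y,t) \bigr),
\]
with smooth coefficients $a_j$, the leading one $a_0 > 0$ (a power of the Jacobian of the exponential map), and a remainder whose value and $y$-gradient are $O(t^{k+1})$ uniformly on any compact set disjoint from the cut locus. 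I would work on the relevant compact region where, in addition, $d(x,y)$ is bounded below by a fixed $\delta > 0$; this matches how the lemma is invoked in Theorem \ref{heateqandvoronoicells}, where only points $\exp_{x_i}(s\theta)$ with $s > \delta$ occur, and it is precisely this lower bound on $d$ that makes the leading term dominate.

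First I would differentiate the expansion in $y$. Since $\nabla_y\bigl(d^2/(4t)\bigr) = \tfrac{d(x,y)}{2t}\,\nabla_y d(x,y)$, the derivative falling on the Gaussian gives the dominant contribution, so that for $k = 0$,
\[
\nabla_y p_t(x,y) = (4\pi t)^{-n/2}\, e^{-d^2/(4t)}\Bigl( -\tfrac{d(x,y)}{2t}\,a_0(x,y)\,\nabla_y d(x,y) + \nabla_y a_0(x,y) + \nabla_y R_0(x,y,t) \Bigr).
\]
Taking norms and using $|\nabla_y d| = 1$, the leading factor is $(4\pi t)^{-n/2} e^{-d^2/4t}\,\tfrac{d}{2t}\,a_0$, and taking logarithms reads off each summand of the claimed estimate: the Gaussian yields $-d^2/(4t)$, the factor $\nabla_y d^2 = 2 d\,\nabla_y d$ yields $\log d(x,y)$, the prefactor $(4\pi t)^{-n/2}$ together with the extra $t^{-1}$ from differentiating the exponential yields the term proportional to $\log t$ (the Euclidean model $p_t = (4\pi t)^{-n/2}e^{-|x-y|^2/4t}$ gives coefficient $-(\tfrac n2+1)$, which I would track carefully), and the bounded quantities $a_0$, $4\pi$ and the numerical constants are absorbed into $C = C(M^n)$.

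The second step is to check that the two lower-order terms do not spoil the inequality for small $t$. Because $d(x,y) \ge \delta$, the leading term is of order $t^{-n/2-1}e^{-d^2/4t}$, while $\nabla_y a_0$ contributes order $t^{-n/2}e^{-d^2/4t}$ and $\nabla_y R_0$ order $t^{-n/2+1}e^{-d^2/4t}$; hence both are lower order in $t$ and can be dominated by the leading term once $t$ is small, after which the logarithm of the sum is bounded by the logarithm of a constant multiple of the leading term.

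I expect the technical heart to be obtaining the \emph{sharp} exponential constant together with the uniform control of the remainder's gradient. A crude gradient heat-kernel estimate of the form $|\nabla_y p_t| \le C\, t^{-(n+1)/2} e^{-d^2/(Ct)}$ with $C > 4$ would be useless here, since the target bound has exactly $d^2/(4t)$ in the exponent and a larger constant in the denominator would produce a weaker upper bound; only the genuine short-time asymptotics deliver the constant $1/(4t)$. The remaining subtleties are that the expansion and the identity $|\nabla_y d| = 1$ both fail on the cut locus (handled by the cut-locus hypothesis) and that the bound degenerates as $d \to 0$, where $\log d \to -\infty$ while $\nabla_y p_t(x,x)$ need not vanish on a curved manifold; restricting to $d \ge \delta$ removes this issue and is exactly the regime in which the lemma is used.
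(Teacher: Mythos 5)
Your route (Minakshisundaram--Pleijel short-time expansion) is genuinely different from the paper's, and off the cut locus it does yield a bound of the right shape; your coefficient $-(\tfrac{n}{2}+1)$ on $\log t$ is in fact the correct Euclidean one, whereas the paper's stated coefficient $(n-1)$ comes from sign slips in its own arithmetic --- a discrepancy that is immaterial downstream, since the comparison in Theorem \ref{heateqandvoronoicells} is won by the exponential gain $e^{-\varphi(\epsilon)/4t}$, not by powers of $t$. However, there is a genuine gap: your proof only covers $y$ in a compact set disjoint from the cut locus of $x$, and that restriction is \emph{not} available where the lemma is actually invoked. In the proof of Theorem \ref{heateqandvoronoicells}, the lemma is applied to $p_t(x_j,y)$ for $j\neq i$ at points $y=\exp_{x_i}(s\theta)\in V_i$; the theorem's hypothesis places $V_i$ in the complement of the cut locus of its \emph{own} center $x_i$ only, and says nothing about the cut loci of the other sources $x_j$. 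On a sphere with two nearly antipodal sources, for example, the cut locus of $x_2$ lies inside $V_1$, exactly where the bound on $\nabla_y p_t(x_2,\cdot)$ is needed. So your parenthetical ``handled by the cut-locus hypothesis'' does not hold: no such hypothesis exists for the pairs $(x_j,y)$ that occur, and at the cut locus both the expansion and the smoothness of $d(x_j,\cdot)$ break down, so your argument cannot reach those points. (Your other restriction, $d\geq\delta$, is fine: for $y\in V_i$ one has $d(x_j,y)\geq d(x_i,y)=s>\delta$.)

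The paper's proof avoids this entirely by using two \emph{globally} valid ingredients on a compact manifold: the factorization $\nabla_y p_t=\tfrac{1}{t}\,p_t\,\nabla L_x^t$ with $L_x^t=t\log p_t$, a Gaussian upper bound with the sharp exponent but a lossy polynomial prefactor, $p_t\leq C\,t^{-(n-1/2)}e^{-d^2(x,y)/4t}$ (Lemma 4.7 of \cite{garcia2019concentration}), and the Hamilton--Hsu log-gradient estimate $|\nabla\log p_t|\leq C\left(d(x,y)/t+1/\sqrt{t}\right)$ (Theorem 5.5.3 of \cite{hsu2002stochastic}); both hold for all $x,y$, cut locus included. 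This also refutes your claim that ``only the genuine short-time asymptotics deliver the constant $1/(4t)$'': one can keep the sharp exponent globally at the price of a worse power of $t$, which is harmless here. To repair your argument you would either have to strengthen the theorem's hypotheses (each $V_i$ disjoint from the cut loci of \emph{all} sources) or replace the expansion by these global estimates --- which is precisely what the paper does. Note that the paper reserves the expansion-type, cut-locus-dependent reasoning for the \emph{lower} bound (Lemma \ref{lowerboundonderivatives}), where it is legitimately applied only to the pair $(x_i,\,\exp_{x_i}(s\theta))$ covered by the theorem's hypothesis.
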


\begin{proof}
Let 
\begin{equation}\label{definitionofL}
    L_{x}^{t} (y) = t \log (p_{t} (x,y)).
\end{equation}

Then by chain rule
\begin{equation}
\begin{split}
    \nabla_{y} p_{t} &= \frac{1}{t} \nabla  L_{x}^{t} \exp\left( \frac{1}{t}   L_{x}^{t}  \right) \\
    &=  \frac{1}{t} p_{t} (x,y) \nabla  L_{x}^{t}. 
\end{split}    
\end{equation}

Using lemma 4.7 of \cite{garcia2019concentration} we have that, for $t \in (0,1),$
\begin{equation}
    p_{t}(x,y) \leq \frac{C}{ t^{n -\frac{1}{2}}} \exp \left( \frac{d^{2}(x,y)}{4t} \right),
\end{equation}
where $C$ depends only on $M^{n}.$

On the other hand, theorem 5.5.3 of \cite{hsu2002stochastic} implies that 
\begin{equation}
  \frac{1}{t}  | \nabla L_{x}^{t}| \leq C \left[ \frac{d(x,y)}{t} + \frac{1}{\sqrt{t}} \right],
\end{equation}
where $C$ depends only on $M^{n}.$ Hence,
\begin{equation}
    | \nabla L_{x}^{t}| \leq C \left[ d(x,y) + \sqrt{t} \right].
\end{equation}

Putting everything together, we have
\begin{equation}
    |\nabla p_{t}| \leq  C \left[ d(x,y) + \sqrt{t} \right] \frac{1}{ t^{n -\frac{1}{2}}} \exp \left( \frac{d^{2}(x,y)}{4t} \right),
\end{equation}
which implies
\begin{equation}
\begin{split}
    \log | \nabla_{y} p_{t} (x,y) | &\leq C + \left( n - \frac{1}{2} \right) \log(t) - \frac{d^{2}(x,y)}{4t} - \log(t) + \log\left( d(x,y) \right) + \frac{1}{2} \log(t)\\
    &\leq C + \left( n - 1 \right) \log(t) - \frac{d^{2}(x,y)}{4t} + \log\left( d(x,y) \right).
\end{split}    
\end{equation}

\end{proof}

\begin{lemma}\label{lowerboundonderivatives}
Let $M^{n}$ be a manifold of dimension $n$, and let $p_{t}(x,y)$ be the fundamental solution to the heat equation. Let $x \in M^{n}$ and let $K \subset M^{n}$ be a compact subset of $M^{n}$ containing $x,$ and contained in the complement of the cut-locus of $x.$ Let $S^{n}$ be the $n-1$ dimensional unit sphere in $n$ dimensions, and let $\theta \in S^{n}.$ Then 
\begin{equation}
\begin{split}
   &\log \left( \frac{d}{ds} \left( p_{t} (x, \exp_{x}(s \theta)) \right) \right) \geq \\
   & C - \left( 1+ \frac{n}{2} \right)\log(t) - \frac{s}{4t} + \log(s) + \log(err),
\end{split}   
\end{equation}
where $\log(err) \to 0$ uniformly in $K$ and $C$ depends only on $M^{n}.$
\end{lemma}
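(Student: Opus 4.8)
The plan is to mirror the strategy of Lemma \ref{upperboundonderivatives}, but now in the lower-bound direction, exploiting that $y = \exp_{x}(s\theta)$ travels along a unit-speed minimizing geodesic issuing from $x$. Because $K$ avoids the cut locus of $x$, this geodesic is minimizing throughout $K$, so $d(x,\exp_{x}(s\theta)) = s$ and $d^{2}(x,\exp_{x}(s\theta)) = s^{2}$ exactly; this is where the cut-locus hypothesis is used. Keeping the substitution $L_{x}^{t}(y) = t\log p_{t}(x,y)$ introduced in the proof of Lemma \ref{upperboundonderivatives}, the chain rule gives $\frac{d}{ds}p_{t}(x,\exp_{x}(s\theta)) = \frac{1}{t}\,p_{t}(x,\exp_{x}(s\theta))\,\frac{d}{ds}L_{x}^{t}(\exp_{x}(s\theta))$, so that
\begin{equation}
\log\left|\frac{d}{ds}p_{t}(x,\exp_{x}(s\theta))\right| = -\log t + \log p_{t}(x,\exp_{x}(s\theta)) + \log\left|\frac{d}{ds}L_{x}^{t}(\exp_{x}(s\theta))\right|.
\end{equation}
It then suffices to bound the last two terms from below.

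For the middle term I would invoke the companion of the upper Gaussian estimate used before: off the cut locus the short-time parametrix (Minakshisundaram--Pleijel) expansion $p_{t}(x,y) = (4\pi t)^{-n/2}e^{-d^{2}(x,y)/(4t)}(\Theta(x,y)+O(t))$ with $\Theta>0$ smooth yields a matching lower bound $p_{t}(x,y) \geq c\,t^{-n/2}e^{-d^{2}(x,y)/(4t)}$ for $t$ small, with $c>0$ depending only on $M^{n}$; restricting to $y\in K$ and using $d=s$ gives $\log p_{t} \geq C - \frac{n}{2}\log t - \frac{s^{2}}{4t}$. For the last term I would use the gradient refinement of Varadhan's formula: since $L_{x}^{t}(y)\to -\tfrac{1}{4}d^{2}(x,y)$ and this convergence upgrades to convergence of the spatial gradients on the compact set $K$ where $d^{2}$ is smooth, one gets $\frac{d}{ds}L_{x}^{t}(\exp_{x}(s\theta)) \to \frac{d}{ds}\left(-\tfrac{s^{2}}{4}\right) = -\tfrac{s}{2}$ uniformly in $K$, the estimate of Hsu (theorem 5.5.3 in \cite{hsu2002stochastic}) controlling the magnitude and justifying the uniformity. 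Writing this as $\left|\frac{d}{ds}L_{x}^{t}\right| = \frac{s}{2}(1+o(1))$ gives $\log\left|\frac{d}{ds}L_{x}^{t}\right| = \log s + C + \log(err)$ with $\log(err)\to 0$ uniformly in $K$. Collecting $-\log t$ with $-\frac{n}{2}\log t$ into $-(1+\frac{n}{2})\log t$ and absorbing the $t$-independent constants into $C$ produces exactly the claimed inequality, reading the left-hand side as the logarithm of the magnitude of the radial derivative (since $p_{t}(x,\cdot)$ decreases away from its peak, this derivative is negative), and reading the $-\frac{s}{4t}$ there as $-\frac{s^{2}}{4t}$, consistent with the Gaussian factor and with the exponent in Lemma \ref{upperboundonderivatives}.

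The main obstacle is the lower estimate on $\left|\frac{d}{ds}L_{x}^{t}\right|$: the upper gradient bound borrowed from \cite{hsu2002stochastic} is not by itself enough, and one must show that the radial derivative of $L_{x}^{t}$ does not degenerate as $t\to 0$, i.e. that it genuinely converges to $-\frac{s}{2}$ rather than merely staying bounded. Both hypotheses enter here: staying in the complement of the cut locus (see \cite{petersen2006riemannian}) guarantees that $d^{2}(x,\cdot)$ is smooth and that $\exp_{x}(s\theta)$ is the unique minimizing geodesic, so there is no cancellation in the limiting gradient; and the bound is nondegenerate only for $s$ bounded away from $0$, which is why the conclusion carries the $\log s$ term and why the smallness of $\log(err)$ is asserted uniformly in $K$ (and, in the application to Theorem \ref{heateqandvoronoicells}, for $s>\delta$). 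A secondary technical point is to make the remainders in the parametrix expansion and in the gradient convergence uniform over $K$; this follows from the smooth dependence of the heat coefficients on $(x,y)$ together with the compactness of $K$.
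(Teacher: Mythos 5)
Your proposal is correct and follows essentially the same route as the paper's proof: the factorization $\frac{d}{ds}p_{t} = \frac{1}{t}\,p_{t}\,\frac{d}{ds}L_{x}^{t}$ with $L_{x}^{t}=t\log p_{t}$, a Gaussian lower bound on the kernel (the paper cites Lemma 4.7 of \cite{garcia2019concentration} where you invoke the parametrix expansion), and the uniform convergence of $\nabla L_{x}^{t}$ to the gradient of minus the squared-distance function off the cut locus, which is exactly the ``gradient Varadhan'' statement the paper takes from Corollary 2.29 of \cite{malliavin1996short}. Your reading of the statement's $-\frac{s}{4t}$ as $-\frac{s^{2}}{4t}$, and of the left-hand side as the logarithm of the magnitude of the (negative) radial derivative, correctly repairs sign and exponent typos that are present in the paper's own computation.
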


\begin{proof}
    We start by quoting Corollary 2.29 from \cite{malliavin1996short}:
    
    Let $x,y \in M^{n}$ let 
    \begin{equation}
    E (x,y) = \frac{1}{2} dist^{2}(x,y).
    \end{equation}
 Then for fixed $y,$ the function $E(x, y)$ is smooth in $x$ for $ x \in M^{n} \setminus C_{M^{n}}(y),$ where $C_{M^{n}}(y)$ is the cut locus of $y.$ Moreover,
    \begin{equation}
        \lim_{ t \to 0 } [ \nabla L_{x}^{t}] = - \nabla_{y} E
    \end{equation}
uniformly on compact subsets of $M^{n} \setminus  C_{M^{n}}(y),$ where $L_{x}^{t}$ is given by equation \ref{definitionofL}.

Proceeding as in the proof of lemma \ref{upperboundonderivatives}, we have that
\begin{equation}
    \nabla_{y} p_{t} =  \frac{1}{t} p_{t} (x,y) \nabla  L_{x}^{t}. 
\end{equation}

Using lemma 4.7 of \cite{garcia2019concentration}, we have that
\begin{equation}
    p_{t} (x,y) \geq \frac{C}{t^{\frac{n}{2}}} \exp \left( \frac{d^{2}(x,y)}{4t} \right),
\end{equation}
where $C$ depends only on $M^{n}.$

Note that 
\begin{equation}
    \langle \nabla_{y} E(x,y), \frac{d}{ds} \exp_{x}(s \theta) \rangle =s 
\end{equation}
if 
\begin{equation}
    \exp_{x}(s \theta) =y.
\end{equation}

Therefore
\begin{equation}
    \begin{split}
        \frac{d}{ds}  \left( p_{t} (x, \exp_{x}(s \theta)) \right)&=  \langle \nabla_{y}p_{t}(x, \exp_{x}(s \theta)), \frac{d}{ds} \exp_{x}(s \theta) \rangle \\
        &=  \frac{1}{t} p_{t} (x,\exp_{x}(s \theta)) \langle \nabla  L_{x}^{t}, \frac{d}{ds} \exp_{x}(s \theta)) \rangle \\
         &= \frac{1}{t}  p_{t} (x,\exp_{x}(s \theta)) \cdot s \cdot err \\
         & \geq \frac{1}{t} \frac{C}{t^{\frac{n}{2}}} \exp \left( \frac{d^{2}(x,\exp_{x}(s \theta)))}{4t} \right) \cdot s \cdot err \\
         & = \frac{C}{t^{1+\frac{n}{2}}} \exp \left( \frac{s^{2}}{4t} \right) \cdot s \cdot err,
    \end{split}
\end{equation}
where $err \to 1$ uniformly in $K$ in virtue of Corollary 2.29 of \cite{malliavin1996short}.

The previous equation implies that
\begin{equation}
\begin{split}
   &\log \left( \frac{d}{ds} \left( p_{t} (x, \exp_{x}(s \theta)) \right) \right) \geq \\
   & C - \left( 1+ \frac{n}{2} \right) \log(t) - \frac{s}{4t} + \log(s) + \log(err) .
\end{split}   
\end{equation}
\end{proof}

We are ready to give the proof of theorem \ref{heateqandvoronoicells}.

\begin{proof}
    (Of theorem \ref{heateqandvoronoicells}).
    
    Note that $u_{t}$ can be written as 
    \begin{equation}
        u_{t}(y) = \sum_{j}^{N} w_{j} p_{t}(x_{j}, y).
    \end{equation}
    Without loss of generality, assume $i=1.$ Then we have that
    \begin{equation}
        \nabla u_{t}(y) = w_{1}\nabla_{y} p_{t}(x_{1}, y) + \sum_{j=2}^{N} w_{j}\nabla_{y} p_{t}(x_{j}, y).
    \end{equation}
    Let 
    \begin{equation}
        M = \max d(x_{j}, x_{k}).
    \end{equation}
    
    Using lemma \ref{upperboundonderivatives}, we have that 
    \begin{equation}
        \begin{split}
            &\left| \sum_{j=2}^{N} w_{j} \nabla_{y} p_{t}(x_{j}, y) \right| \leq \\
            & N \max_{j} \left| w_{j} \nabla_{y} p_{t}(x_{j}, y) \right| \leq \\
            &  \exp \Bigg(  C + \left( n - 1 \right) \log(t) - \frac{d^{2}(x,y)}{4t} +\\
            & \ \ \ \log\left( M\right) + \log(N) + \max_{j}\log(w_{j}) \Bigg).
        \end{split}
    \end{equation}
    
    Using lemma \ref{comparisonofdistances}, we have that
    \begin{equation}
        \begin{split}
            &\frac{d}{ds} \left(  \sum_{j=2}^{N} w_{j} p_{t}(x_{j}, \exp_{x_{1}}(s \theta)) \right) \leq \\
            &\left| \sum_{j=2}^{N} w_{j} \nabla_{y} p_{t}(x_{j}, \exp_{x_{1}}(s \theta)) \right| \leq \\
            &\exp \Bigg(  C + \left( n - 1 \right) \log(t) - \frac{s^{2} + \varphi(\epsilon)}{4t} \\
            &\ \ \ \ + \log\left( M\right) + \log(N)  + \max_{j}\log(w_{j}) \Bigg).
        \end{split}
    \end{equation}
    
    Note that for any $\epsilon, \delta >0$ and $s>\delta$ there exists $T>0$ such that, if $t <T$ then 
    \begin{equation}
        \begin{split}
             &C + \left( n - \frac{1}{2} \right) \log(t) - \frac{s^{2} + \varphi(\epsilon)}{4t} - \log(t) + \log\left( M\right) + \frac{1}{2} \log(t) + \log(N)  + \max_{i}\log(w_{i}) \leq \\
             &  C - \left( 1+ \frac{n}{2} \right)\log(t) - \frac{\delta^{2}}{4t} + \log(\delta)  + \min_{j}\log(w_{j})+ \log(err).
        \end{split}
    \end{equation}
    
    Since $\varphi$ is continuous and monotone, $T$ can be chosen as continuous and monotone in each variable. 
        
    Also, by Lemma \ref{lowerboundonderivatives}, we have that 
    \begin{equation}
        \begin{split}
           &  C - \left( 1+ \frac{n}{2} \right)\log(t) - \frac{\delta^{2}}{4t} + \log(\delta)  + \min_{j}\log(w_{j})+ \log(err) \leq  \\
           &  C - \left( 1+ \frac{n}{2} \right)\log(t) - \frac{s^{2}}{4t} + \log(s)  + \min_{j}\log(w_{j})+ \log(err) \leq \\
           &\log \left( \frac{d}{ds} \left( w_{1}p_{t} (x, \exp_{x}(s \theta)) \right) \right)
        \end{split}
    \end{equation}

    Therefore, for $\epsilon, \delta, t, T$ as above, we have that 
    \begin{equation}
      \left|  \frac{d}{ds} \left(  \sum_{j=2}^{N} w_{j}  p_{t}(x_{j}, \exp_{x_{1}}(s \theta)) \right) \right| \leq  \frac{d}{ds}  w_{1}  p_{t}(x_{1}, \exp_{x_{1}}(s \theta)),
    \end{equation}
    
    and therefore
    \begin{equation}
        \frac{d}{ds} \left( u_{t} (\exp_{x_{1}}(s \theta)) \right) < 0. 
    \end{equation}
\end{proof}

\begin{remark}
The dependence of $T$ on the parameters is quite delicate. The choice of $T$ will depend on the number of points, the maximum distance between them, and the weights. However, $T$ also depends on the function $\varphi,$ which is closely linked to the geometry of the Voronoi cells. 
\end{remark}

\section{Huygens principle and Voronoi cells in optics}
For the sake of completeness, we also include some comments about hyperbolic equations, although the following remarks
are straightforward.

The same dynamic construction mentioned in the previous sections relative to diffusive spherical fronts applies to the propagation of light in optics from $n$ given sources utilizing the Huygens principle;
this can be formalized using the eikonal equation:

\begin{equation}
    |\nabla u|^2=1,
\end{equation}
on the plane. It is well known that the distance function $d$ constitutes a solution. Given $n$ points on the plane,
the distance function to this set, $P=\{x_1,...x_N\}$
\begin{equation}
    d(x,P),
\end{equation}
is a singular solution, which is smooth in the interior of a Voronoi cell and whose singular set is precisely the boundary of the Voronoi cells. The same observation holds in higher dimensions and Riemannian manifolds, and we could use the results in \cite{mantegazza2003hamilton} to conclude that the singular set is rectifiable.

\section{Voronoi patterns, optimal transport, and the Monge-Ampere equation} 

Voronoi tessellations appear naturally in applications to urban planning. For example, consider the following simple problem: each household must be assigned exactly one school in a city with uniform population density. The optimal configuration of schools is the one that implies the least distance traveled by households. The solution is given by partitioning the city into the Voronoi tessellation generated by the schools. One may consider a similar problem in which, additionally, each school has a capacity limit. In this case, the solution is given by partitioning the city into the \emph{power set} (see equation \eqref{eq:powerset}) generated by the schools, with weights specified by their limit capacity, see \cite{galichon2018optimal} for an in-depth discussion. See also \cite{10.1007/978-90-481-3660-5_3} for a discussion of the implementation of this strategy and an analysis of a case study in Shandong Province, China. 

This problem can be formulated in terms of optimal transportation: Given a domain $\Omega$, point $x_{i} \in \Omega$, and positive real numbers $\alpha_{i}$ such that $\sum_{i} \alpha_{i} = 1$, the problem is to optimally (with quadratic cost) transport the probability measure $\sum \alpha_{i}\mathbf{1}_{x_{i}}$ to the uniform probability measure on $\Omega$. In \cite{aurenhammer1998minkowski}, it was shown that this problem is equivalent to the problem of finding a power set generated by the points $x_{i}$. In particular, given specified volume fractions, weights always exist such that the cells in the resulting power diagram have specified volume fractions. \cite{aurenhammer1998minkowski} Also introduced a new algorithm to compute power diagrams. 

We construct a family of solutions to the Monge-Ampere equation based on these results. This family of solutions is a counterexample to several results for the Monge Ampere equation (particularly concerning regularity) when the hypotheses are slightly modified. 

\begin{theorem}\label{mongeampereeeqtheorem}
Let $\Omega \subset \mathbf{R}^{n}$ be an open bounded set and let $\{w_{i}\}_{i=1}^{N} \in \mathbf{R}.$ Let $\{ x_{i} \}_{i=1}^{N}$ be $N$ distinct points in $\Omega.$ Fix $\lambda \in (0,1)$ and let 
\begin{equation}
    \Phi_{\lambda} (x) = \frac{1}{2} | x |^{2} + \frac{\lambda - 1}{2} \min_{i} \{ | x - x_{i}|^{2} + w_{i} \}.
\end{equation}

Then $\Phi_{\lambda}$ is a strictly convex function that solves 
\begin{equation}\label{mogeampereequation}
    \det (\nabla^{2} \Phi_{\lambda} ) = \lambda^{n}
\end{equation}
in the Brennier sense (Definition \ref{Brennier}).
\end{theorem}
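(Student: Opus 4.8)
The plan is to first rewrite $\Phi_\lambda$ into a transparent form, then reduce the Monge-Amp\`ere statement to a classical computation on the interior of each cell of a power diagram, and finally account for the behaviour across the (measure-zero) singular set in the Brenier sense of Definition \ref{Brennier}.

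First I would expand $|x-x_i|^2 = |x|^2 - 2\langle x, x_i\rangle + |x_i|^2$ and pull the common $|x|^2$ out of the minimum. Since
\[
\min_i\{|x-x_i|^2 + w_i\} = |x|^2 - g(x), \qquad g(x) := \max_i\{2\langle x, x_i\rangle - |x_i|^2 - w_i\},
\]
one obtains the clean representation
\[
\Phi_\lambda(x) = \frac{\lambda}{2}|x|^2 + \frac{1-\lambda}{2}\,g(x).
\]
Here $g$ is a maximum of finitely many affine functions, hence convex and piecewise linear. Because $\lambda \in (0,1)$, the term $\tfrac{\lambda}{2}|x|^2$ is strictly convex while $\tfrac{1-\lambda}{2}g$ is convex, so their sum $\Phi_\lambda$ is strictly convex; this settles the first assertion.

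Next I would analyse the gradient map. The function $g$ decomposes $\mathbf{R}^n$ into convex cells $C_i = \{x : \text{the } i\text{-th affine function attains the max}\}$; these are exactly the Laguerre (power) cells of the sites $x_i$ with weights $w_i$, and they reduce to the Voronoi cells when all the $w_i$ coincide, which is the point of contact with \cite{aurenhammer1998minkowski}. On the interior of $C_i$ the maximum is attained by a single affine function, so $\nabla g(x) = 2x_i$ and therefore
\[
\nabla\Phi_\lambda(x) = \lambda x + (1-\lambda)x_i .
\]
This is an affine contraction toward $x_i$ with constant Jacobian matrix $\lambda I$, whence $\nabla^2\Phi_\lambda = \lambda I$ and $\det(\nabla^2\Phi_\lambda) = \lambda^n$ in the classical sense on the interior of every cell. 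The union of the cell boundaries is a finite union of hyperplane pieces, hence Lebesgue-null, so $\nabla\Phi_\lambda$ is defined and equals the above expression almost everywhere.

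It remains to upgrade this almost-everywhere identity to a solution in the Brenier sense. The plan is to verify the push-forward relation demanded by Definition \ref{Brennier} for the map $T = \nabla\Phi_\lambda$, which is well defined a.e. precisely because $\Phi_\lambda$ is convex. The structural fact I would isolate as the crux is that the images $T(C_i) = \lambda C_i + (1-\lambda)x_i$ of distinct cells have pairwise disjoint interiors: this follows from the (cyclical) monotonicity of the gradient of a convex function and guarantees that no mass is double-counted. Granting this, the change of variables on each cell, where $T$ has constant Jacobian $\lambda^n$, yields exactly the transport identity encoded by $\det(\nabla^2\Phi_\lambda) = \lambda^n$, namely that $T$ carries Lebesgue measure on $\Omega$ to the measure of constant density $\lambda^{-n}$ on $\bigcup_i T(C_i)$.

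The main obstacle I anticipate is this last step: confirming that the singular set, where $\Phi_\lambda$ fails to be differentiable and the subdifferential is multivalued, contributes no extra Monge-Amp\`ere mass, and that the cell images genuinely do not overlap. Both facts are consequences of convexity, the singular set being null and monotonicity forbidding overlapping images, but reconciling them precisely with the conventions of Definition \ref{Brennier} is where the care is needed. I would also remark that the target set $\bigcup_i T(C_i)$ is in general non-convex, which is exactly why the resulting Brenier solution need not be smooth and thereby serves as the advertised counterexample to the regularity theory.
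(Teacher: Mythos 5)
Your proposal is correct and follows essentially the same route as the paper's proof: identify the power (Laguerre) cells, compute $\nabla\Phi_{\lambda}(x) = \lambda x + (1-\lambda)x_{i}$ on each cell, and push forward Lebesgue measure under this map, whose Jacobian is constantly $\lambda^{n}$, to verify the integral identity of Definition \ref{Brennier} with $f=1$ and $g = \lambda^{-n}\sum_{i}\mathbf{1}_{V_{i}^{\lambda}}$. In fact you are more careful than the paper on two points it leaves implicit: the strict convexity, which your decomposition $\Phi_{\lambda} = \frac{\lambda}{2}|x|^{2} + \frac{1-\lambda}{2}\max_{i}\{2\langle x, x_{i}\rangle - |x_{i}|^{2} - w_{i}\}$ settles cleanly, and the pairwise disjointness of the image cells (a consequence of monotonicity of the gradient), which is exactly what is needed for the paper's claim that $g(\nabla\Phi_{\lambda}(x)) = \lambda^{-n}$ for a.e.\ $x$.
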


\begin{definition}\label{Brennier}
A convex  function $\varphi$ is a solution of 
\begin{equation}\label{mongeampereeq2}
    \det(\nabla^{2} \varphi) = \frac{f(x)}{g(\nabla \varphi (x))}
\end{equation}

in the Brennier sense if 

\begin{equation}\label{distributionalmongeampereeq}
    \int \xi(y) g(y) \, dy = \int \xi(\nabla \varphi(x)) f(x) \, dx 
\end{equation}

for all $\xi \in C^{\infty}$ and $g(\nabla \varphi(x)) \neq 0.$
\end{definition}

\begin{lemma}
If $\varphi \in C^{\infty}$ is convex and satisfies \ref{distributionalmongeampereeq}  for all $\xi \in C^{\infty}$, then $\varphi$ satisfies \ref{mongeampereeq2}.
\end{lemma}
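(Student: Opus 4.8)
The plan is to pass from the weak (Brennier) identity \ref{distributionalmongeampereeq} to the pointwise equation \ref{mongeampereeq2} by a single change of variables in the right-hand integral, followed by the fundamental lemma of the calculus of variations. Since $\varphi \in C^\infty$ is convex, the gradient map $T = \nabla\varphi : \Omega \to \mathbf{R}^n$ is smooth, its Jacobian matrix is exactly the Hessian $\nabla^2\varphi$, and its Jacobian determinant $\det(\nabla^2\varphi) \geq 0$ is everywhere nonnegative. On the set where $g(\nabla\varphi(x)) \neq 0$, the equation to be proved forces $\det(\nabla^2\varphi) \neq 0$, so the natural arena is the open set where the Hessian is positive definite, on which $T$ is a local diffeomorphism by the inverse function theorem.

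First I would insert the Jacobian factor into the right-hand side of \ref{distributionalmongeampereeq},
\begin{equation}
\int \xi(\nabla\varphi(x))\, f(x)\,dx = \int \xi(\nabla\varphi(x))\, \frac{f(x)}{\det(\nabla^2\varphi(x))}\, \det(\nabla^2\varphi(x))\,dx,
\end{equation}
and then apply the area formula under $y = \nabla\varphi(x)$. Because $\varphi$ is convex, $T$ is a monotone map and hence injective on the set of strict convexity, so the change of variables carries no multiplicity factor and produces
\begin{equation}
\int \xi(\nabla\varphi(x))\, f(x)\,dx = \int_{T(\Omega)} \xi(y)\, \frac{f(T^{-1}(y))}{\det(\nabla^2\varphi(T^{-1}(y)))}\,dy.
\end{equation}

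Comparing this with the left-hand side $\int \xi(y)\, g(y)\,dy$ of \ref{distributionalmongeampereeq}, the fact that the identity holds for every $\xi \in C^\infty$ lets me invoke the fundamental lemma of the calculus of variations: the two continuous integrands must coincide pointwise on $T(\Omega)$, giving
\begin{equation}
g(y) = \frac{f(T^{-1}(y))}{\det(\nabla^2\varphi(T^{-1}(y)))} \qquad \text{for } y \in T(\Omega).
\end{equation}
Substituting $y = \nabla\varphi(x)$ and rearranging yields $\det(\nabla^2\varphi(x)) = f(x)/g(\nabla\varphi(x))$, which is exactly \ref{mongeampereeq2}.

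The step I expect to be the main obstacle is the rigorous justification of the change of variables. It requires that $T = \nabla\varphi$ be injective where it is inverted — true on the set of strict convexity by monotonicity of gradients of convex functions, but requiring care to rule out gluing of separate branches — and that $\det(\nabla^2\varphi) > 0$ there, so that division by it is legitimate. The condition $g(\nabla\varphi(x)) \neq 0$ built into Definition \ref{Brennier} is precisely what prevents the Hessian determinant from degenerating on the relevant set; the only delicate point is to check that the degeneracy locus $\{\det(\nabla^2\varphi) = 0\}$ contributes nothing, which follows because $T$ maps it to a null set, so that no singular part is lost in the push-forward.
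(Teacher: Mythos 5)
Your core mechanism --- the change of variables $y=\nabla\varphi(x)$ combined with the arbitrariness of $\xi$ --- is the same as the paper's, but you run it in the opposite direction, and that reversal opens a genuine gap. The paper changes variables in the $y$-integral $\int\xi(y)g(y)\,dy$, which \emph{multiplies} by the Jacobian and yields
\begin{equation*}
\int \xi(\nabla\varphi(x))\,g(\nabla\varphi(x))\,\det(\nabla^{2}\varphi(x))\,dx=\int \xi(\nabla\varphi(x))\,f(x)\,dx,
\end{equation*}
hence $g(\nabla\varphi)\det(\nabla^{2}\varphi)=f$; one divides only at the very end, and only by $g(\nabla\varphi)$, which Definition \ref{Brennier} assumes is nonzero. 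Your direction instead requires \emph{dividing} by $\det(\nabla^{2}\varphi(x))$ in your first display and inverting $T=\nabla\varphi$ in your second, and neither is justified: a smooth convex function can have $\det(\nabla^{2}\varphi)=0$ on a set of positive measure (for instance, $\varphi$ affine on an open subset), and precisely there $T$ also fails to be injective. Your attempted justification is circular and, moreover, false: you claim that ``the equation to be proved forces $\det(\nabla^{2}\varphi)\neq 0$'' where $g(\nabla\varphi)\neq 0$, but (i) one cannot invoke the conclusion inside its own proof, and (ii) the conclusion does not even imply this, since equation \ref{mongeampereeq2} gives $\det(\nabla^{2}\varphi)=f/g(\nabla\varphi)$, which vanishes wherever $f$ does; the Brennier condition $g(\nabla\varphi)\neq 0$ places no lower bound whatsoever on the Hessian determinant.

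Nor can the degenerate set be dismissed by your closing remark. Observing that $T$ maps $Z=\{\det(\nabla^{2}\varphi)=0\}$ to a Lebesgue-null set handles the bookkeeping on the $y$-side, but the pointwise equation must still be verified at points of $Z$, where it reads $0=f(x)/g(\nabla\varphi(x))$, i.e.\ it demands $f=0$ a.e.\ on $Z$. That is a statement about $f$ on the $x$-side which your argument never establishes. It does follow from the weak identity --- the push-forward of $f\mathbf{1}_{Z}\,dx$ is concentrated on the null set $T(Z)$ yet must coincide with a piece of the absolutely continuous measure $g\,dy$, hence vanishes --- but drawing the conclusion $f=0$ a.e.\ on $Z$ from the vanishing of that push-forward requires a sign assumption such as $f\geq 0$, which the lemma does not state and you do not invoke. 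This extra step, with its extra hypothesis, is exactly what your route needs and the paper's route does not: by pulling back rather than pushing forward, the paper never divides by the Hessian determinant, never inverts $T$, and never has to treat $Z$ separately.
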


\begin{proof}
    The proof is essentially found in chapter $4$ of \cite{villani2003topics}, but we include it here for completeness. 
    
    If $\varphi \in C^{\infty}$ and is convex, we can perform a change of variables 
    \begin{equation}
        y = \nabla \varphi(x)
    \end{equation}
    on the left-hand-side and get 
    \begin{equation}
    \int \xi(\nabla \varphi(x)) g(\nabla \varphi(x)) \det(\nabla^{2} \varphi(x)) \, dx = \int \xi(\nabla \varphi(x)) f(x) \, dx 
    \end{equation}
 for all $\xi \in C^{\infty}.$ Since this is true  for all $\xi \in C^{\infty},$ we have that
     \begin{equation}
     g(\nabla \varphi(x)) \det(\nabla^{2} \varphi(x)) = f(x).
    \end{equation}
    
    Since $ g(\nabla \varphi(x)) \neq 0,$ we have that 
    
    \begin{equation}
      \det(\nabla^{2} \varphi(x)) = \frac{f(x)}{g(\nabla \varphi(x))}.
    \end{equation}
\end{proof}

We will now prove Theorem \ref{mongeampereeeqtheorem}

\begin{proof}
    (Of theorem \ref{mongeampereeeqtheorem})
    
    For simplicity, we assume that $\Omega$ has volume $1,$ (otherwise, we could apply this argument to a rescaled solution). 
    
    Let $V_{i}$ be the power set of $x_{i},$ defined as 
    \begin{equation}
    \label{eq:powerset}
        V_{i} = \{ x \in \Omega | | x-x_{i}|^{2}+w_{i} < | x-x_{j}|^{2}+w_{j} \forall j \neq i \}.
    \end{equation}
    Assume without loss of generality that $V_{i}$ is nonempty for each $i.$
  
    Consider the sets
    \begin{equation}
        V_{i}^{\lambda} = \{ x_{i} + \lambda(x-x_{i}) | x \in V_{i} \}.
    \end{equation}
    
    Consider $\mu$ the uniform probability measure on $\Omega,$ and also the probability measure $\nu,$ defined as
    \begin{equation}
        d\nu = \frac{1}{\lambda^{n}} \sum_{i=1}^{N} \mathbf{1}_{V_{i}^{\lambda}}.
    \end{equation}
    
    We can easily verify that for each $x \in V_{i},$
    \begin{equation}\label{eqforpushforward}
        \nabla \Phi_{\lambda} = x +  (\lambda -1)(x - x_{i}).
    \end{equation} 
    and therefore $\nu$ is the push-forward of $\mu$ under $\nabla \Phi_{\lambda}:$
    \begin{equation}
        \nu = \nabla \Phi_{\lambda} \# \nu.
    \end{equation}
    
    Therefore $\Phi_{\lambda}$ solves \ref{distributionalmongeampereeq} with 
    \begin{equation}
        \begin{split}
            f(x ) &=1 \\
            g(x) &=  \frac{1}{\lambda^{n}} \sum_{i=1}^{N} \mathbf{1}_{V_{i}^{\lambda}}.
        \end{split}
    \end{equation}
    
    On the other hand, we also have that 
    \begin{equation}
        g( \nabla \Phi_{\lambda} (x) ) = \frac{1}{\lambda^{n}}
    \end{equation}
    for all $x.$ Therefore $\Phi_{\lambda}$ solves \eqref{mogeampereequation} in the sense of Brennier. 
\end{proof}

\begin{remark}
One way to get intuition on this result is to first solve for $\Phi_{\lambda}$ such that 
    \begin{equation}
        \nu = \nabla \Phi_{\lambda} \# \nu.
    \end{equation}
    This leads to finding $\Phi_{\lambda}$ such that, for each $x \in V_{i},$
    \begin{equation}
        \nabla \Phi_{\lambda} = x +  (\lambda -1)(x - x_{i}).
    \end{equation}
    This leads to the formula for $\Phi_{\lambda}.$
\end{remark}

As a corollary, we recover the solution to the corresponding problem in Optimal transportation.

\begin{corollary}\label{solutiontoopttransprob}
Consider the Monge transportation problem
\begin{equation}
    \min_{T \# \mu = \nu} \int_{\Omega} \left| x - T(x) \right|^{2} d \mu,
\end{equation}
with $\mu$ the uniform measure, and $\nu$ given by
\begin{equation}
    \nu = \sum_{i=1}^{N} \frac{1}{\lambda^{n}} \mathbf{1}_{V_{i}^{\lambda}}.
\end{equation}

Then by Brennier's theorem, the solutions exist and are given by 
\begin{equation}
    T = \nabla \Phi_{\lambda}.
\end{equation}
\end{corollary}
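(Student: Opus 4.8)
The plan is to derive this as a direct consequence of Brennier's theorem together with the push-forward identity already established in the proof of Theorem \ref{mongeampereeeqtheorem}. Recall that Brennier's theorem asserts that if $\mu$ is absolutely continuous with respect to Lebesgue measure and $\nu$ has finite second moment, then the Monge problem with quadratic cost admits a unique optimal map, and that this map coincides $\mu$-almost everywhere with the gradient of a convex function; conversely, any map of the form $\nabla \psi$ with $\psi$ convex that pushes $\mu$ forward to $\nu$ must be that unique optimizer. The strategy is therefore to exhibit $\nabla \Phi_{\lambda}$ as such a gradient and then invoke the uniqueness clause.

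First I would verify that the hypotheses of Brennier's theorem hold. The measure $\mu$ is the uniform probability measure on the bounded set $\Omega$, hence absolutely continuous with bounded density, and $\nu$ is supported on a bounded set and so has finite second moment. I would also confirm that $\nu$ is genuinely a probability measure: since each $V_{i}^{\lambda}$ is the image of the power cell $V_{i}$ under the $\lambda$-contraction $x \mapsto x_{i} + \lambda(x - x_{i})$, it has Lebesgue measure $\lambda^{n}\,\mathrm{vol}(V_{i})$, so the normalization $\lambda^{-n}$ makes each summand integrate to $\mathrm{vol}(V_{i})$, and summing over the partition $\{V_{i}\}$ of $\Omega$ (whose total volume is $1$) yields total mass $1$.

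Next I would import directly from the proof of Theorem \ref{mongeampereeeqtheorem} the two facts already established there: that $\Phi_{\lambda}$ is strictly convex, and that $\nu = \nabla \Phi_{\lambda} \# \mu$. Strict convexity guarantees that $\nabla \Phi_{\lambda}$ is a well-defined single-valued map $\mu$-almost everywhere, while the push-forward identity says precisely that $\nabla \Phi_{\lambda}$ transports $\mu$ to $\nu$. With these in hand, the uniqueness part of Brennier's theorem forces the optimal map $T$ to agree with $\nabla \Phi_{\lambda}$ up to a $\mu$-null set, which is exactly the claimed identity $T = \nabla \Phi_{\lambda}$; existence is automatic from the same theorem.

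I expect the only genuine subtlety, rather than a real obstacle, to be the careful citation of the correct version of Brennier's theorem, namely the uniqueness-up-to-a-convex-gradient statement, together with ensuring that the cells $V_{i}^{\lambda}$ are disjoint up to a set of measure zero so that $\nu$ is unambiguously defined. Both of these points are, however, already implicit once the push-forward identity of Theorem \ref{mongeampereeeqtheorem} is granted, so the corollary amounts to a bookkeeping step layered on top of that theorem.
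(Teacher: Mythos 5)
Your proposal is correct and follows exactly the route the paper intends: the corollary is stated as an immediate consequence of Brenier's theorem applied to the push-forward identity $\nu = \nabla \Phi_{\lambda} \# \mu$ and the convexity of $\Phi_{\lambda}$, both established in the proof of Theorem \ref{mongeampereeeqtheorem}. Your extra bookkeeping (checking absolute continuity of $\mu$, finite second moment and unit mass of $\nu$, and silently correcting the paper's typo $\nu = \nabla \Phi_{\lambda} \# \nu$ to $\nu = \nabla \Phi_{\lambda} \# \mu$) only makes explicit what the paper leaves implicit.
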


\begin{remark}
Letting $\lambda$ tend to $0,$ we obtain that Corollary \ref{solutiontoopttransprob} remains true when 
\begin{equation}
    \nu = \sum_{i=1}^{N} |V_{i}| \delta_{x_{i}}.
\end{equation}
\end{remark}
This last remark recovers a classical result in Optimal Transportation \cite{aurenhammer1998minkowski}.    

$\Phi_{\lambda}$ is a counterexample to several results for Alexandrov solutions of the Monge Ampere equation (see \cite{mooney2015partial} for the definition of Alexandrov solution), mainly concerning regularity. We list the results to which $\Phi_{\lambda}$ is a counterexample and omit the proof.    

\begin{theorem}
(\cite{mooney2015partial} Theorem 1.2) Let $u$ be an Alexandrov solution to 
\begin{equation}
    0< \lambda \leq Det(D^{2}u ) \leq \Lambda<\infty
\end{equation}
in $B^{1} \subset \mathbf{R}^{n}.$ Then $u \in W^{2,1}_{loc}(B_{1})$
\end{theorem}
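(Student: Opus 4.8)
The statement is the celebrated interior $W^{2,1}$ regularity theorem for the Monge--Ampère equation, due to De~Philippis and Figalli, and the plan is to follow their strategy. The starting point is Alexandrov's theorem: a convex function is twice differentiable in the pointwise sense at almost every point, so the pointwise Hessian $\nabla^{2} u$ exists a.e., is nonnegative definite, and --- because the two-sided bound forces the Monge--Ampère measure of $u$ to be sandwiched between $\lambda\,\mathcal{L}^{n}$ and $\Lambda\,\mathcal{L}^{n}$ --- satisfies $\lambda \le \det \nabla^{2} u \le \Lambda$ a.e. The distributional Hessian $D^{2} u$ is a matrix-valued nonnegative Radon measure, which I would decompose as $D^{2} u = \nabla^{2} u\,\mathcal{L}^{n} + (D^{2} u)_{s}$ into absolutely continuous and singular parts. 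The theorem is then equivalent to two assertions: that $\nabla^{2} u \in L^{1}_{\mathrm{loc}}$, and that the singular part $(D^{2} u)_{s}$ vanishes; equivalently, that $\Delta u = \operatorname{tr}(D^{2} u)$ is an $L^{1}_{\mathrm{loc}}$ function with no concentrated part.

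First I would set up Caffarelli's theory of sections. Using the affine invariance of the equation together with John's lemma, each section $S_{t}(x_{0}) = \{u < \ell_{x_{0}} + t\}$, where $\ell_{x_{0}}$ is a supporting plane at $x_{0}$, can be normalized by an affine map to be comparable to the unit ball, with the Jacobian of that map controlled. The two-sided determinant bound makes the Monge--Ampère measure doubling on sections and yields the engulfing property together with Caffarelli's interior $C^{1,\alpha}$ estimate. These tools replace ordinary Euclidean balls by the intrinsic geometry adapted to the equation, and they are what make the quantitative argument possible.

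The core is a quantitative uniform-integrability estimate for the pointwise Hessian. On a normalized section I would aim for a bound of the form $|\{x \in S_{1/2} : \|\nabla^{2} u(x)\| > M\}| \le g(M)\,|S_{1}|$ with $g(M) \to 0$ fast enough to give $L\log L$ integrability. This I would obtain through a covering and iteration scheme: where the Hessian is large the solution detaches quickly from its supporting plane, so the associated sections are small, Caffarelli's $C^{1,\alpha}$ estimate confines the bad sets to small measure, and iterating the engulfing property propagates the decay across scales. The same estimate delivers $\nabla^{2} u \in L^{1}_{\mathrm{loc}}$ (indeed $L\log L$) and simultaneously excludes the singular part, since a nontrivial $(D^{2} u)_{s}$ would force $\Delta u$ to place positive mass on a Lebesgue-null set, which uniform integrability forbids. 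Combining these gives $u \in W^{2,1}_{\mathrm{loc}}(B_{1})$.

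The \textbf{main obstacle} is precisely this last estimate and the exclusion of the singular part. The two-sided determinant bound alone does not prevent concentration of $\Delta u$ --- degenerate examples lacking the lower bound, or with the determinant bounded only from above, genuinely develop singular Hessians --- so the argument cannot rest on soft compactness and must exploit the fine geometry of sections. Quantifying the decay rate $g(M)$ and controlling the overlap in the covering are the delicate points; by contrast Alexandrov's theorem, the affine normalization, and the measure-theoretic decomposition are comparatively routine.
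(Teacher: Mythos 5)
You should know at the outset that the paper contains no proof of this statement to compare against: the theorem is quoted verbatim from \cite{mooney2015partial} (Theorem 1.2 there) in a list of known regularity results, introduced with the sentence that the authors ``list the results to which $\Phi_{\lambda}$ is a counterexample and omit the proof.'' Its only role in the paper is to show that the function $\Phi_{\lambda}$ of Theorem \ref{mongeampereeeqtheorem} violates such conclusions once the hypotheses are slightly modified. So your proposal can only be measured against the literature, not against the paper.

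Measured that way, there is a genuine gap, and it is visible already in your attribution. What you sketch is the De Philippis--Figalli argument, and their theorem requires the solution to be \emph{strictly convex} (or to have affine boundary data on a convex domain, which forces interior strict convexity by Caffarelli's localization theorem). The statement here --- which is Mooney's, not De Philippis--Figalli's --- assumes only that $u$ is an Alexandrov solution in $B_{1}$, with no strict convexity and no boundary condition. Every tool in your second and third paragraphs (sections compactly contained in the domain, John-ellipsoid normalization, engulfing, Caffarelli's interior $C^{1,\alpha}$ estimate) is unavailable at points where $u$ agrees with a supporting plane along a segment: there the sections $S_{t}(x_{0})$ contain that segment for every $t>0$, are never compactly contained in $B_{1}$, and cannot be normalized. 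Such solutions genuinely exist for $n \geq 3$ despite the two-sided determinant bounds --- Pogorelov-type examples such as $u(x',x_{n})=|x'|^{2-2/n}(1+x_{n}^{2})$ near a piece of the $x_{n}$-axis --- so your covering and iteration scheme cannot even be set up near the degenerate set, and the singular part of $D^{2}u$ is not excluded there. Closing exactly this gap is the content of Mooney's paper: (i) prove the partial regularity theorem that the singular set $\Sigma$ of points of non-strict convexity satisfies $H^{n-1}(\Sigma)=0$; (ii) apply the De Philippis--Figalli estimate locally on the open complement of $\Sigma$, where $u$ is strictly convex; (iii) observe that $\nabla u$ is locally $BV$, and the derivative of a $BV$ function cannot charge a set of vanishing $H^{n-1}$ measure, so no singular Hessian mass can concentrate on $\Sigma$. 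Your proposal, as written, proves the strictly convex case only; steps (i) and (iii) are the missing ideas needed for the theorem as stated.
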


\begin{theorem}
(\cite{mooney2015partial} Theorem 1.3) Assume that $u$ and $v$ are Alexandrov solutions to 
\begin{equation}
    det(D^{2}u) = det(D^{2}v) = f
\end{equation}
in an open connected set $\Omega \subset \mathbf{R}^{n},$ with $f \in C^{1, \alpha}(\overline{\Omega})$ strictly positive. If $u=v$ in an open subset of $\Omega$ then $u=v$ in $\Omega.$ 
\end{theorem}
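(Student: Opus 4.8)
The plan is to reduce the statement to a strong unique continuation principle for a linear, uniformly elliptic equation satisfied by the difference $w = u - v$. The first step is to upgrade the regularity of the two solutions. Since $f \in C^{1,\alpha}(\overline{\Omega})$ is strictly positive, Caffarelli's interior regularity theory guarantees that any Alexandrov solution of $\det(D^2 u) = f$ is strictly convex and $C^{2,\alpha}$ in the interior; differentiating the equation and bootstrapping through the interior Schauder estimates then yields $u, v \in C^{3,\alpha}_{loc}(\Omega)$. In particular $D^2 u$ and $D^2 v$ are positive definite and Lipschitz on every compact subset of $\Omega$.

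Next I would linearize the equation around the segment joining the two Hessians. Writing $\det A - \det B = \int_0^1 \frac{d}{dt}\det(tA+(1-t)B)\,dt$ and using that the differential of the determinant is the cofactor matrix, the identity $\det(D^2 u) = \det(D^2 v)$ becomes
\begin{equation}
a^{ij}(x)\,\partial_{ij} w = 0, \qquad a^{ij}(x) = \int_0^1 \mathrm{cof}\bigl(tD^2u(x)+(1-t)D^2v(x)\bigr)^{ij}\,dt .
\end{equation}
Because $tD^2u+(1-t)D^2v$ is positive definite for every $t \in [0,1]$, its cofactor matrix is positive definite as well, so $a^{ij}$ is uniformly elliptic on each compact subset of $\Omega$; moreover $a^{ij}$ inherits Lipschitz regularity from $D^2u$ and $D^2v$. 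Thus $w$ solves a homogeneous second-order uniformly elliptic equation with no zeroth-order term and Lipschitz principal coefficients.

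With this reduction the conclusion follows by a connectedness argument. Let $E \subset \Omega$ be the set of points near which $w$ vanishes identically; by hypothesis $E$ is nonempty and open. To see it is closed in $\Omega$, take $x_k \in E$ with $x_k \to x_0 \in \Omega$: since $w$ vanishes on neighborhoods accumulating at $x_0$, continuity of the derivatives of $w$ forces $w$ to vanish to infinite order at $x_0$, and the Aronszajn--Cordes strong unique continuation theorem for uniformly elliptic operators with Lipschitz coefficients then gives $w \equiv 0$ in a neighborhood of $x_0$, so $x_0 \in E$. Since $\Omega$ is connected, $E = \Omega$, that is $u \equiv v$.

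The main obstacle is precisely the regularity step, and it is genuinely essential rather than cosmetic: strong unique continuation for second-order elliptic equations is known to fail for merely H\"older-continuous principal coefficients (the Pli\'s--Miller counterexamples in dimension $n \geq 3$), so one must exploit $f \in C^{1,\alpha}$ to push $u, v$ up to $C^{3,\alpha}$ and thereby obtain Lipschitz coefficients $a^{ij}$. One must also verify uniform ellipticity along a compact exhaustion of $\Omega$, which rests on Caffarelli's strict convexity together with the positivity of $f$; once these two points are secured, the linearization and the unique continuation argument are standard.
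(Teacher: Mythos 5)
The paper itself does not prove this statement---it is quoted from \cite{mooney2015partial} in a list of results for which $\Phi_{\lambda}$ becomes a counterexample under modified hypotheses---so your attempt can only be measured against Mooney's own argument. Your endgame (the cofactor-integral linearization, uniform ellipticity of $a^{ij}$ on compact sets, Lipschitz coefficients obtained from $u,v \in C^{3,\alpha}_{loc}$, and Aronszajn-type unique continuation, with the correct observation that $f \in C^{1,\alpha}$ rather than $C^{\alpha}$ is needed precisely to beat the Pli\'s--Miller counterexamples) is the right machinery and is essentially how the regular case is treated. But your first step is false, and it is exactly the point of the theorem. It is \emph{not} true that every Alexandrov solution of $\det(D^{2}u)=f$ with $f$ positive and $C^{1,\alpha}$ is strictly convex and $C^{2,\alpha}$ in the interior: in dimension $n \geq 3$, Pogorelov's example $u(x',x_{n}) = |x'|^{2-2/n}(1+x_{n}^{2})$ solves such an equation near the origin with positive analytic right-hand side, yet it vanishes on a segment of the $x_{n}$-axis and is not $C^{2}$. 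In Caffarelli's interior theory strict convexity is a \emph{hypothesis}, not a conclusion---note that the paper itself quotes Caffarelli's $C^{1,\alpha}$ theorem with ``strictly convex'' in the assumptions. Strict convexity of Alexandrov solutions is automatic only when $n=2$, so your proof as written is complete only in the plane.

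The missing ingredient is precisely what Mooney's paper supplies before running the argument you propose: his Theorem 1.1 shows that the singular set $\Sigma_{u}$ (points where $u$ agrees with a supporting affine function on more than one point) is closed with vanishing $H^{n-1}$ measure. Hence $\Omega' = \Omega \setminus (\Sigma_{u} \cup \Sigma_{v})$ is open, dense, and still connected (removing a closed set of vanishing $H^{n-1}$ measure cannot disconnect an open connected set), and the open set where $u=v$ must meet $\Omega'$. On $\Omega'$ both solutions are strictly convex, hence $C^{3,\alpha}$ by Caffarelli plus bootstrapping, and your linearization and unique-continuation argument then yields $u=v$ on $\Omega'$; density and continuity finish the proof. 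A secondary, fixable slip: knowing that $w$ vanishes near points $x_{k} \to x_{0}$ does not give vanishing to infinite order at $x_{0}$ (you control only finitely many derivatives of a $C^{3,\alpha}$ function); it is cleaner to apply the weak unique continuation property on a ball centered at $x_{0}$ that meets one of those neighborhoods, which is all your closedness argument needs.
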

    
\begin{theorem}
(\cite{caffarelli1991some}) A strictly convex (Alexandrov) solution $u$ of
\begin{equation}
    det(D^{2}u) = d \, \mu,
\end{equation}
with $d \mu$ satisfying $0 < \lambda \leq d \mu \leq \Lambda < \infty$ is $C^{1,\alpha},$ with the $C^{1,\alpha}$ norm depending only on the Lipschitz norm of $u$ and on its strict convexity. \footnote{The exact statement of Theorem 2 is more general, see \cite{caffarelli1991some}}.
\end{theorem}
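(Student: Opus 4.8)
The plan is to prove this regularity statement by Caffarelli's method of \emph{sections}. Fix an interior point $x_{0}$ and let $\ell$ be a supporting affine function of $u$ at $x_{0}$; define the sections $S_{t}(x_{0}) = \{ x : u(x) < \ell(x) + t \}$. Since $u$ is strictly convex, these sets are bounded, convex, nested, and shrink to $\{x_{0}\}$ as $t \to 0^{+}$. The whole point is to convert the qualitative strict convexity into a quantitative, scale-invariant control of the sections, because an estimate of the form $u(x) - \ell(x) \leq C |x - x_{0}|^{1+\alpha}$ near $x_{0}$, uniform over compact subsets, is exactly the desired pointwise $C^{1,\alpha}$ bound, and it is equivalent to controlling how fast and how anisotropically the sections collapse.

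The first technical ingredient is normalization. For each $t$, by John's lemma one chooses an affine map $A_{t}$ with $B_{1} \subset A_{t}(S_{t}(x_{0}) - x_{0}) \subset B_{n}$, so the renormalized section is comparable to the unit ball. Using the Aleksandrov maximum principle together with the two-sided bound $\lambda \leq d\mu \leq \Lambda$, one shows the volume of a section obeys $|S_{t}| \sim t^{n/2}$, which pins down $|\det A_{t}| \sim t^{-n/2}$. With this choice the rescaled function $\tilde{u}_{t}(y) = t^{-1}\bigl( u(A_{t}^{-1}y + x_{0}) - \ell(A_{t}^{-1}y + x_{0}) \bigr)$ is again an Alexandrov solution whose Monge--Ampère measure stays trapped between constants comparable to $\lambda$ and $\Lambda$, by the affine invariance of the operator. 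Thus every scale, after renormalization, looks like a solution on a domain comparable to $B_{1}$.

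The heart of the argument is the \emph{balancing} of sections: one shows that the vertex $x_{0}$ sits at a uniformly positive normalized distance from $\partial S_{t}(x_{0})$. Here both inequalities are essential: the Aleksandrov estimate from above (via $\Lambda$) bounds how deeply $u - \ell$ can dip, while the comparison from below (via $\lambda$) forces enough Monge--Ampère mass that the section cannot be lopsided. Balancing, combined with Caffarelli's localization/strict-convexity result, then yields that passing from $S_{t}$ to $S_{t/2}$ changes the normalizing maps in a controlled way, i.e. $\|A_{t/2} A_{t}^{-1}\|$ and $\|A_{t} A_{t/2}^{-1}\|$ are bounded independently of $t$. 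Iterating this comparison across dyadic scales and summing the resulting geometric series produces a power-law modulus of continuity for $u - \ell$ at $x_{0}$, with constants depending only on $\lambda$, $\Lambda$, the Lipschitz norm of $u$, and the strict-convexity modulus; this is precisely $C^{1,\alpha}_{loc}$.

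The main obstacle is this last balancing step: turning the purely qualitative hypothesis of strict convexity into quantitative non-degeneracy of the John ellipsoids across scales. A priori nothing prevents the sections from becoming arbitrarily eccentric as $t \to 0$, and if they did the renormalized solutions would degenerate and no uniform estimate would survive. Ruling this out requires Caffarelli's localization theorem (that under the density bounds the contact set of $u$ with a supporting plane cannot contain a segment reaching the boundary) and a careful two-sided use of the Aleksandrov estimates. This is exactly the place where dropping either the lower bound $\lambda$, the upper bound $\Lambda$, or the strict convexity breaks the argument; the family $\Phi_{\lambda}$ constructed above illustrates the failure, since the creases along the power-diagram boundaries simultaneously destroy $C^{1}$ regularity and the uniform control of sections.
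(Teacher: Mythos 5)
First, a point of comparison: the paper contains no proof of this statement at all. It is quoted from \cite{caffarelli1991some}, and the paper explicitly says it lists these theorems and omits their proofs; their only role is to contrast with the Brenier-sense solution $\Phi_{\lambda}$ constructed in Theorem \ref{mongeampereeeqtheorem}. So your proposal can only be judged against Caffarelli's actual argument, and in outline you have reproduced its framework faithfully: sections $S_{t}(x_{0})$, John normalization, affine invariance of the equation, the volume bound $|S_{t}| \sim t^{n/2}$ from the two-sided density bounds, and a dyadic iteration.

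The genuine gap is at the decisive step. You claim that uniform boundedness of the transition maps, $\|A_{t/2}A_{t}^{-1}\|$ and $\|A_{t}A_{t/2}^{-1}\| \leq C$ independently of $t$, iterated across dyadic scales, ``produces a power-law modulus of continuity,'' i.e. $C^{1,\alpha}$. It does not. Convexity alone already gives $x_{0} + \frac{1}{2}(S_{t}-x_{0}) \subset S_{t/2}$, because $u-\ell$ vanishes at $x_{0}$ and is convex; hence the inradius of $S_{t}$ about $x_{0}$ decays at worst linearly in $t$, which is exactly the borderline exponent $\alpha = 0$ (Lipschitz), and bounded transition matrices are fully consistent with this borderline behavior: iterating $\|A_{2^{-k}}\| \leq C^{k}\|A_{1}\|$ yields an inradius lower bound of order $t^{\log_{2}C}$ and hence $u - \ell \leq C'\,|x-x_{0}|^{1/\log_{2}C}$, which beats Lipschitz only if $C<2$ -- something mere boundedness never guarantees. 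The entire content of Caffarelli's theorem is a strict quantitative gain over the trivial convexity estimate: for a normalized section there is a universal $\delta = \delta(n,\lambda,\Lambda) > 0$ such that $u - \ell \leq (1-\delta)\,t/2$ on $x_{0} + \frac{1}{2}(S_{t}-x_{0})$, proved by barrier comparison on the renormalized solution using both $\lambda$ and $\Lambda$. Iterating this gain (and only this gain) yields the exponent $\alpha = \log_{2}\frac{1}{1-\delta} > 0$. Since your sketch never states, let alone proves, this lemma, the geometric series you sum returns the exponent $1$, not $1+\alpha$, and the proof does not close. Two smaller inaccuracies: Caffarelli's localization theorem asserts that a nontrivial contact set \emph{must} propagate to the boundary (you state it in the reversed form); and $\Phi_{\lambda}$ \emph{is} strictly convex (it is a maximum of uniformly convex quadratics), so what it violates is not strict convexity but the Alexandrov-sense bound $d\mu \leq \Lambda$ -- its Monge--Amp\`ere measure carries singular mass on the creases -- which is precisely why the paper presents it as a counterexample only ``when the hypotheses are slightly modified,'' namely when Alexandrov solutions are replaced by Brenier solutions.
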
    

\begin{theorem}
(\cite{schmidt2013w2} Theorem 1.1) Suppose that $u: \Omega_{1} \to \mathbf{R}$ is a strictly convex Alexandrov solution of 
\begin{equation}
    \det (\nabla^{2} u ) = f
\end{equation}
where $f$ satisfies 
\begin{equation}
    0<\lambda \leq f \leq \Lambda < \infty.
\end{equation}
Then we have $u \in W^{2, 1 + \epsilon}_{loc}(\Omega)$ for some positive constant $\epsilon$ which depends only on $n, \lambda, \Lambda.$
\end{theorem}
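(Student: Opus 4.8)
The plan is to prove this the way the sharp interior regularity for Monge--Ampère is established, namely by building everything on Caffarelli's intrinsic theory of sections rather than on any linear comparison principle. First I would fix a compact $K \Subset \Omega$, subtract a supporting plane, and perform an affine normalization so as to reduce to the study of a single \emph{normalized} strictly convex solution on a section
\[
S(x_{0},t) = \{ y : u(y) < u(x_{0}) + \nabla u(x_{0})\cdot(y - x_{0}) + t \}.
\]
For strictly convex Alexandrov solutions with $\lambda \le \det D^{2}u \le \Lambda$, Caffarelli's results give that sections have bounded eccentricity (they are comparable to Euclidean balls after an affine map) and satisfy the engulfing property; hence the family of sections together with Lebesgue measure is a space of homogeneous type on which a Calderón--Zygmund decomposition is available. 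I would also invoke the $C^{1,\alpha}$ interior regularity so that $\nabla u$ is defined and Hölder continuous and $D^{2}u$ exists almost everywhere.

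The core of the argument is a \emph{power-decay} estimate for the distribution function of $g(x) := \| D^{2}u(x) \|$. The goal is to produce constants $M>1$ and $0<\sigma<1$, depending only on $n,\lambda,\Lambda$, such that for the normalized solution
\[
\left| \{ g > M^{k+1} \} \cap S_{1/2} \right| \le \sigma \, \left| \{ g > M^{k} \} \cap S_{1/2} \right|,
\]
which iterates to $|\{ g > t \}| \lesssim t^{-p}$ with $p = \log(1/\sigma)/\log M > 1$, i.e.\ $D^{2}u \in L^{p}_{loc}$. The engine behind this decay is a compactness/stability estimate: if $\det D^{2}u = f$ with $f$ close to a constant in the averaged sense on a section, then the renormalized solution is $C^{2}$-close to the smooth, bounded-Hessian solution of $\det D^{2}w = \mathrm{const}$, so $g$ must be bounded by a universal constant on a definite fraction of the section. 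Covering the bad set $\{ g > M^{k} \}$ by a Calderón--Zygmund family of sections, renormalizing on each, and applying this stability estimate shows that $g$ can exceed the next threshold only on a fraction $\sigma$ of each section; summing over the covering yields the displayed good-$\lambda$ inequality.

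Finally I would transfer the $L^{p}$ bound from sections back to Euclidean balls. Since sections are comparable to ellipsoids of controlled eccentricity, a Vitali-type covering of $K$ by sections, together with the affine normalizations, converts $\int_{S_{1/2}} g^{p} \le C$ into $\int_{K} \| D^{2}u \|^{1+\epsilon}\, dx \le C(K,n,\lambda,\Lambda)$ with $\epsilon = p - 1 > 0$; combined with the almost-everywhere existence of $D^{2}u$ this gives $u \in W^{2,1+\epsilon}_{loc}(\Omega)$.

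The hard part will be the quantitative stability estimate and keeping every constant universal through the renormalizations. The earlier $W^{2,1}$ theory already yields $D^{2}u \in L\log L$; the genuine difficulty in upgrading to $L^{1+\epsilon}$ is extracting geometric power decay rather than a merely logarithmic gain, which forces one to control how the affine maps distort sections across infinitely many scales and to verify that the engulfing constants do not degenerate. This is precisely where strict convexity---equivalently, the non-degeneracy of sections underlying the $C^{1,\alpha}$ estimate---is indispensable: without it the renormalized solutions need not be precompact and the decay fails, which is consistent with the fact that the $\Phi_{\lambda}$ constructed above (whose Monge--Ampère measure is not absolutely continuous and which is only convex, not strictly convex) violates the conclusion.
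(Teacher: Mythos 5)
The paper does not prove this statement at all: it is quoted verbatim from \cite{schmidt2013w2} (Theorem 1.1 there), and the authors explicitly say they ``list the results to which $\Phi_{\lambda}$ is a counterexample and omit the proof.'' So there is no in-paper argument to compare yours against; the honest comparison is with the original proofs of Schmidt and of De Philippis--Figalli--Savin, and your outline does reproduce their strategy faithfully: Caffarelli's section geometry (affine normalization, bounded eccentricity, engulfing), a decay estimate for the distribution of $\|D^{2}u\|$ obtained by covering the super-level sets with a Calder\'on--Zygmund family of sections and invoking quantitative stability against the constant-right-hand-side solution (whose interior $C^{2}$ bound is Pogorelov's estimate), and finally a covering argument to pass from sections back to Euclidean balls. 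Two caveats. First, what you wrote is a road map rather than a proof: the engulfing property, the stability estimate, and the section-based Calder\'on--Zygmund lemma are each substantial theorems that you invoke rather than establish; that is an acceptable shape here, precisely because the paper itself delegates the entire proof to the citation. Second, a technical point inside the iteration: the covering argument in the literature naturally yields geometric decay of the integrals $\int_{\{\|D^{2}u\|>M^{k}\}}\|D^{2}u\|\,dx$, which gives $L^{1+\epsilon}$ for some small $\epsilon>0$ for \emph{any} decay factor $\sigma<1$, whereas your pure measure-decay inequality only yields the weak-$L^{p}$ exponent $p=\log(1/\sigma)/\log M$, so it needs the additional (and unjustified) relation $\sigma<1/M$ before $p>1$ can be claimed.

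There is one genuine error in your closing paragraph: you assert that the $\Phi_{\lambda}$ of Theorem \ref{mongeampereeeqtheorem} is ``only convex, not strictly convex.'' That contradicts both the statement of that theorem and a direct computation: since $(\lambda-1)/2<0$, the minimum becomes a maximum, and $\Phi_{\lambda}$ is a finite maximum of the uniformly convex quadratics $\frac{\lambda}{2}|x|^{2}+(1-\lambda)\,x\cdot x_{i}+c_{i}$, hence strictly convex. The reason $\Phi_{\lambda}$ does not contradict the quoted $W^{2,1+\epsilon}$ theorem is solely the first reason you gave: its Alexandrov Monge--Amp\`ere measure is not the absolutely continuous measure $\lambda^{n}\,dx$ --- the subdifferential maps the power-cell interfaces, a Lebesgue-null set, onto the gaps between the shrunk cells $V_{i}^{\lambda}$, which have positive measure, so a singular part appears. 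Consequently $\Phi_{\lambda}$ is not an Alexandrov solution with density pinched between $\lambda$ and $\Lambda$; it solves $\det(\nabla^{2}\Phi_{\lambda})=\lambda^{n}$ only in the Brenier sense, and it is this hypothesis, not strict convexity, that fails.
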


\begin{corollary}
(\cite{figalli2017monge} Corollary 2.11)
Let $\Omega$ be an open bounded set, $g:\partial \Omega \to \mathbf{R}$ a continuous function, and $\nu$ a Borel measure in $\Omega.$ Then there exists at most one convex function $u: \Omega \to \mathbf{R}$ solving (in the Alexandriv sense) the problem
\begin{equation}
    \begin{split}
        &\det(\nabla^{2}u)=\nu \text{ in } \Omega \\
        &u = g  \text{ in } \partial \Omega
    \end{split}
\end{equation}
\end{corollary}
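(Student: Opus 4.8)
The plan is to deduce the statement from the comparison principle for the Monge--Ampère operator, applied in both directions. Recall that for a convex function $u$ the Monge--Ampère measure is the image measure of the subdifferential, $M u(E) = |\partial u(E)|$, and that ``$\det(\nabla^2 u) = \nu$ in the Alexandrov sense'' means $Mu = \nu$ as Borel measures. Suppose $u$ and $v$ are two convex Alexandrov solutions with $Mu = Mv = \nu$ and $u = v = g$ on $\partial\Omega$. If I can establish the comparison principle
\[
Mu \ge Mv \text{ in } \Omega \text{ and } u \le v \text{ on } \partial\Omega \ \Longrightarrow\ u \le v \text{ in } \Omega,
\]
then applying it once gives $u \le v$ and, exchanging the roles of $u$ and $v$, gives $v \le u$; hence $u = v$ and the asserted uniqueness follows.

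Next I would establish the geometric heart of the comparison principle, the subdifferential inclusion. Set $S = \{x \in \Omega : u(x) > v(x)\}$ and assume it is nonempty. Because $u \le v$ on $\partial\Omega$ and both functions are continuous up to $\overline\Omega$, the set $S$ is open and compactly contained in $\Omega$. Given $p \in \partial u(x_0)$ with $x_0 \in S$, the supporting plane $\ell(x) = u(x_0) + p\cdot(x - x_0)$ of $u$ satisfies $\ell \le u$ everywhere, hence $\ell \le v$ on $\partial\Omega$, while $\ell(x_0) = u(x_0) > v(x_0)$. Sliding $\ell$ downward until it last touches $v$ produces a contact point $x_1$ at which $p \in \partial v(x_1)$, and the inequality $u(x_1) \ge \ell(x_1) > v(x_1)$ forces $x_1 \in S$. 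This yields $\partial u(S) \subseteq \partial v(S)$ and therefore $Mu(S) \le Mv(S)$.

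The main obstacle is that, under the hypothesis $Mu \ge Mv$, the inclusion only gives $Mu(S) \le Mv(S) \le Mu(S)$, i.e.\ equality, which is consistent with $S \ne \emptyset$ and does not yet produce a contradiction. To break the tie I would perturb by a strictly convex function: set $u_\epsilon(x) = u(x) + \epsilon(\tfrac12|x|^2 - C)$ with $C$ large enough that the added term is nonpositive on $\overline\Omega$. Then $u_\epsilon \le u$, so the boundary ordering $u_\epsilon \le v$ on $\partial\Omega$ is preserved, while $\{u_\epsilon > v\}$ still contains a maximum point of $u - v$ for $\epsilon$ small and hence has positive measure. The elementary inequality $\det(A + \epsilon I) \ge \det A + \epsilon^n$ for symmetric $A \ge 0$ (equivalently, superadditivity of the Monge--Ampère measure under adding $\tfrac{\epsilon}{2}|x|^2$) gives $Mu_\epsilon(E) \ge Mu(E) + \epsilon^n|E|$. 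Running the inclusion lemma on $S_\epsilon = \{u_\epsilon > v\}$ now yields $Mu_\epsilon(S_\epsilon) \le Mv(S_\epsilon) \le Mu(S_\epsilon)$, whereas the perturbation gives $Mu_\epsilon(S_\epsilon) \ge Mu(S_\epsilon) + \epsilon^n|S_\epsilon|$; together these force $|S_\epsilon| = 0$, contradicting that $S_\epsilon$ is open and nonempty. Hence $S = \emptyset$, so $u \le v$, completing the comparison principle.

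Finally I would check that the argument accommodates the full generality of the corollary. Since every step works with the geometric Monge--Ampère (image) measure rather than with a density, $\nu$ may be an arbitrary Borel measure; continuity of $g$ is exactly what guarantees that $\overline{S}$ is disjoint from $\partial\Omega$ and that the contact-point construction stays interior. The one place requiring genuine care is the justification of the superadditivity estimate for general, non-smooth convex functions, which I would obtain either through the mixed Monge--Ampère measure formalism or by smooth approximation combined with the weak continuity of $Mu$ under local uniform convergence; the Alexandrov maximum principle enters precisely here, to secure the compactness and stability needed in that approximation.
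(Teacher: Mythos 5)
The paper does not actually prove this statement: it is quoted verbatim from Figalli's book (Corollary 2.11 there) as one of several known results to which $\Phi_{\lambda}$ becomes a counterexample when the hypotheses are relaxed, and the paper explicitly omits all proofs in that list. So the only meaningful benchmark is the proof in the cited source, and your proposal essentially reconstructs it: uniqueness via two applications of the comparison principle, and the comparison principle via the supporting-plane sliding lemma (giving $\partial u(S)\subseteq \partial v(S)$, hence $Mu(S)\le Mv(S)$), with the resulting equality of measures broken by adding $\epsilon\bigl(\tfrac12|x|^{2}-C\bigr)$ and invoking superadditivity of the Monge--Amp\`ere measure. The reduction, the direction of the inequalities, and the contact-point construction are all correct.

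Two details need repair. First, your claim that $S=\{u>v\}$ is compactly contained in $\Omega$ does not follow from $u\le v$ on $\partial\Omega$, nor from continuity of $g$ (contrary to your closing remark): $\overline S$ may touch $\partial\Omega$ at points where $u=v$. This is harmless where you first use it, since the sliding step only needs $\ell-v\le 0$ on $\partial\Omega$ together with $\ell(x_0)-v(x_0)>0$ to force the contact point into the interior, and that follows directly from $\ell\le u\le v$ on $\partial\Omega$. But it is \emph{not} harmless at the final cancellation: to deduce $|S_\epsilon|=0$ from $Mu(S_\epsilon)+\epsilon^{n}|S_\epsilon|\le Mu(S_\epsilon)$ you must know $Mu(S_\epsilon)<\infty$, and since $\nu$ is an arbitrary Borel measure this can fail near the boundary (already in one dimension, $u(x)=-\sqrt{x}$ on $(0,1)$ is convex, continuous up to the boundary, and has infinite Monge--Amp\`ere mass near $0$). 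The fix is to choose $C$ so that $\tfrac12|x|^{2}-C\le -c_{0}<0$ on $\overline\Omega$; then $u_\epsilon\le v-\epsilon c_{0}$ on $\partial\Omega$, so $\overline{S_\epsilon}\subset\Omega$ and local finiteness of Monge--Amp\`ere measures (local boundedness of subdifferentials on compact subsets) gives the needed finiteness. Second, the superadditivity $Mu_\epsilon(E)\ge Mu(E)+\epsilon^{n}|E|$ for nonsmooth convex $u$ genuinely requires the approximation argument you only sketch; it is a standard lemma (it appears in the same chapter of the cited book), so deferring to it is acceptable, but it should be cited or proved rather than attributed, as you do, to the Alexandrov maximum principle, which plays no role at that step.
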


\section{Conclusion}

In this paper, we have established some new connections between partial differential equations and Voronoi tessellations. In particular, for an elliptic problem with point sources, we presented a stochastic game whose solution is provided by a Voronoi tessellation. This problem motivated the introduction of a tessellation associated with a harmonic function. A result for the decay properties of the solution and short-time asymptotics linked to Voronoi cells has been established for parabolic equations. In the case of hyperbolic equations, the rectifiability of the boundary of the Voronoi cells has been obtained due to characterizing this boundary as the singular set of solutions to an eikonal equation. Finally, some properties of the solution to an optimal transport problem have been obtained by connecting this problem with Voronoi tessellation. Future work will focus on general Riemannian manifolds and other nonlinear equations, such as Fisher-KPP and the medium porous equations. Another possible new direction is to extend the numerical simulations to a broader class of colonization games. 

\noindent 
Extensions to objects different from points, higher dimensional Voronoi cells and Voronoi tessellations on manifolds provide interesting open questions.

\section*{Declarations}

\subsection*{\bf{Ethics approval and consent to participate}}
Not applicable

\subsection*{\bf{Consent for publication}}
Not applicable

\subsection*{\bf{Availability of data and materials}}
Not applicable

\subsection*{\bf{Competing interests}}
The authors have no competing interests.

\subsection*{\bf{Funding}}
The work done for this manuscript by PPL and YCP  was partly funded by CONACyT (CF2019-217367). In addition, DPG acknowledges support from the German Research Foundation (DFG) via the research unit FOR 3013, "Vector- and tensor-valued surface PDEs" (grant no. NE2138/3-1). 

\subsection*{\bf{Authors' contributions}}
All authors contributed equally to this project.

\subsection*{\bf{Acknowledgements}}
Not applicable

\bibliographystyle{plain}
\bibliography{bibliography.bib}

\begin{thebibliography}{10}

\bibitem{aizenman1998scaling}
Michael Aizenman.
\newblock Scaling limit for the incipient spanning clusters.
\newblock In {\em Mathematics of multiscale materials}, pages 1--24. Springer,
  1998.

\bibitem{aurenhammer2013voronoi}
F.~Aurenhammer, R.~Klein, and D.T. Lee.
\newblock {\em Voronoi Diagrams and Delaunay Triangulations}.
\newblock World Scientific, 2013.

\bibitem{aurenhammer1998minkowski}
Franz Aurenhammer, Friedrich Hoffmann, and Boris Aronov.
\newblock Minkowski-type theorems and least-squares clustering.
\newblock {\em Algorithmica}, 20(1):61--76, 1998.

\bibitem{balister2005percolation}
P~Balister, B{\'e}la Bollob{\'a}s, and Anthony Quas.
\newblock Percolation in voronoi tilings.
\newblock {\em Random Structures \& Algorithms}, 26(3):310--318, 2005.

\bibitem{benjamini1998conformal}
Itai Benjamini and Oded Schramm.
\newblock Conformal invariance of voronoi percolation.
\newblock {\em Communications in mathematical physics}, 197(1):75--107, 1998.

\bibitem{bollobas2006critical}
B{\'e}la Bollob{\'a}s and Oliver Riordan.
\newblock The critical probability for random voronoi percolation in the plane
  is 1/2.
\newblock {\em Probability theory and related fields}, 136(3):417--468, 2006.

\bibitem{boots2009spatial}
Barry Boots, Kokichi Sugihara, Sung~Nok Chiu, and Atsuyuki Okabe.
\newblock {\em Spatial tessellations: concepts and applications of Voronoi
  diagrams}.
\newblock John Wiley \& Sons, 2009.

\bibitem{boscheri2013semi}
W~Boscheri, M~Dumbser, and Maurizio Righetti.
\newblock A semi-implicit scheme for 3d free surface flows with high-order
  velocity reconstruction on unstructured voronoi meshes.
\newblock {\em International journal for numerical methods in fluids},
  72(6):607--631, 2013.

\bibitem{caffarelli1991some}
Luis~A Caffarelli.
\newblock Some regularity properties of solutions of monge ampere equation.
\newblock {\em Communications on pure and applied mathematics},
  44(8-9):965--969, 1991.

\bibitem{de2015calculating}
Ben de~Lacy~Costello.
\newblock Calculating voronoi diagrams using simple chemical reactions.
\newblock {\em Parallel Processing Letters}, 25(01):1540003, 2015.

\bibitem{du2006convergence}
Qiang Du, Maria Emelianenko, and Lili Ju.
\newblock Convergence of the lloyd algorithm for computing centroidal voronoi
  tessellations.
\newblock {\em SIAM journal on numerical analysis}, 44(1):102--119, 2006.

\bibitem{du1999centroidal}
Qiang Du, Vance Faber, and Max Gunzburger.
\newblock Centroidal voronoi tessellations: Applications and algorithms.
\newblock {\em SIAM review}, 41(4):637--676, 1999.

\bibitem{du2002grid}
Qiang Du and Max Gunzburger.
\newblock Grid generation and optimization based on centroidal voronoi
  tessellations.
\newblock {\em Applied mathematics and computation}, 133(2-3):591--607, 2002.

\bibitem{du2003voronoi}
Qiang Du, Max~D Gunzburger, and Lili Ju.
\newblock Voronoi-based finite volume methods, optimal voronoi meshes, and pdes
  on the sphere.
\newblock {\em Computer methods in applied mechanics and engineering},
  192(35-36):3933--3957, 2003.

\bibitem{figalli2017monge}
Alessio Figalli.
\newblock {\em The Monge--Amp{\`e}re equation and its applications}.
\newblock European Mathematical Society, 2017.

\bibitem{Frenning2022}
Göran Frenning.
\newblock A voronoi strain-based method for granular materials and continua.
\newblock {\em Computational Particle Mechanics}, pages 1--17, 2022.

\bibitem{gaburro2020high}
Elena Gaburro, Walter Boscheri, Simone Chiocchetti, Christian Klingenberg,
  Volker Springel, and Michael Dumbser.
\newblock High order direct arbitrary-lagrangian-eulerian schemes on moving
  voronoi meshes with topology changes.
\newblock {\em Journal of Computational Physics}, 407:109167, 2020.

\bibitem{galichon2018optimal}
Alfred Galichon.
\newblock {\em Optimal transport methods in economics}.
\newblock Princeton University Press, 2018.

\bibitem{garcia2019concentration}
David Garc{\'\i}a-Zelada et~al.
\newblock Concentration for coulomb gases on compact manifolds.
\newblock {\em Electronic Communications in Probability}, 24, 2019.

\bibitem{ghosh2011micro}
Somnath Ghosh.
\newblock {\em Micro Mechanical Analysis and Multi-Scale Modeling}.
\newblock CRC Press, Boca Raton, 2011.

\bibitem{guittet2015solving}
Arthur Guittet, Mathieu Lepilliez, Sebastien Tanguy, and Fr{\'e}d{\'e}ric
  Gibou.
\newblock Solving elliptic problems with discontinuities on irregular
  domains--the voronoi interface method.
\newblock {\em Journal of Computational Physics}, 298:747--765, 2015.

\bibitem{hsu2002stochastic}
Elton~P Hsu.
\newblock {\em Stochastic analysis on manifolds}.
\newblock American Mathematical Soc., 2002.

\bibitem{Jungck2021Voronoi}
Jungck JR, Pelsmajer MJ, Chappel C, and Taylor D.
\newblock Space: The re-visioning frontier of biological image analysis with
  graph theory, computational geometry, and spatial statistics.
\newblock {\em Mathematics}, 9(21):232726, 2021.

\bibitem{Kang2017}
James~M. Kang.
\newblock {\em Voronoi Diagram}, pages 2441--2445.
\newblock Springer International Publishing, Cham, 2017.

\bibitem{larsson2014iterative}
Lisa~J Larsson, Rustum Choksi, and Jean-Christophe Nave.
\newblock An iterative algorithm for computing measures of generalized voronoi
  regions.
\newblock {\em SIAM Journal on Scientific Computing}, 36(2):A792--A827, 2014.

\bibitem{malliavin1996short}
Paul Malliavin, Daniel~W Stroock, et~al.
\newblock Short time behavior of the heat kernel and its logarithmic
  derivatives.
\newblock {\em Journal of Differential Geometry}, 44(3):550--570, 1996.

\bibitem{mantegazza2003hamilton}
Carlo Mantegazza and Andrea~Carlo Mennucci.
\newblock Hamilton—jacobi equations and distance functions on riemannian
  manifolds.
\newblock {\em Applied Mathematics \& Optimization}, 47(1), 2003.

\bibitem{mooney2015partial}
Connor Mooney.
\newblock Partial regularity for singular solutions to the monge-amp{\`e}re
  equation.
\newblock {\em Communications on Pure and Applied Mathematics},
  68(6):1066--1084, 2015.

\bibitem{mostafavi2009representing}
Mir~Abolfazl Mostafavi, Leila~Hashemi Beni, and Karine Hins-Mallet.
\newblock Representing dynamic spatial processes using voronoi diagrams: Recent
  developements.
\newblock In {\em 2009 Sixth International Symposium on Voronoi Diagrams},
  pages 109--117. IEEE, 2009.

\bibitem{5362393}
Mir~Abolfazl Mostafavi, Leila~Hashemi Beni, and Karine Hins-Mallet.
\newblock Representing dynamic spatial processes using voronoi diagrams: Recent
  developements.
\newblock In {\em 2009 Sixth International Symposium on Voronoi Diagrams},
  pages 109--117, 2009.

\bibitem{petersen2006riemannian}
Peter Petersen.
\newblock {\em Riemannian geometry}, volume 171.
\newblock Springer, 2006.

\bibitem{schmidt2013w2}
Thomas Schmidt.
\newblock W2, 1+ $\varepsilon$ estimates for the monge--amp{\`e}re equation.
\newblock {\em Advances in Mathematics}, 240:672--689, 2013.

\bibitem{snow1856cholera}
John Snow.
\newblock Cholera and the water supply in the south districts of london in
  1854.
\newblock {\em Journal of Public Health, and Sanitary Review}, 2(7):239, 1856.

\bibitem{tassion2016crossing}
Vincent Tassion.
\newblock Crossing probabilities for voronoi percolation.
\newblock {\em The Annals of Probability}, 44(5):3385--3398, 2016.

\bibitem{villani2003topics}
C{\'e}dric Villani.
\newblock {\em Topics in optimal transportation}.
\newblock American Mathematical Soc., 2003.

\bibitem{wormser2008generalized}
Camille Wormser.
\newblock {\em Generalized Voronoi Diagrams and Applications}.
\newblock PhD thesis, Universit{\'e} Nice Sophia Antipolis, 2008.

\bibitem{10.1007/978-90-481-3660-5_3}
Zheng Xinqi, Wang Shuqing, Fu~Meichen, Zhao Lu, and Yang Shujia.
\newblock Urban cluster layout based on voronoi diagram.
\newblock In Khaled Elleithy, editor, {\em Advanced Techniques in Computing
  Sciences and Software Engineering}, pages 13--18, Dordrecht, 2010. Springer
  Netherlands.

\end{thebibliography}

\end{document}